\documentclass[]{amsart}

\usepackage{amssymb, mathrsfs, color, amsthm, tikz, subcaption}

\newtheorem{thm}{Theorem}[section]
\newtheorem{cor}[thm]{Corollary}
\newtheorem{lem}[thm]{Lemma}

\newtheorem{prop}[thm]{Proposition}

\newtheorem{rem}[thm]{Remark}

\numberwithin{equation}{section}

\renewcommand{\phi}{\varphi}

\newcommand{\D}{\mathbb{D}}

\newcommand{\Dir}{\mathcal{D}}

\newcommand{\C}{\mathbb{C}}

\newcommand{\eps}{\varepsilon}

\newcommand{\B}{\mathcal{B}}

\newcommand{\diam}{\mbox{\rm diam\,}}

\begin{document}

\title[Composition Semigroups on the Besov Spaces]
{Composition Semigroups on the Besov Spaces}
\date{\today}

\author[A. Anderson]{Austin Anderson}
\address{Department of Mathematics, University of Hawaii, Honolulu, Hawaii 96822}
\email{austina@hawaii.edu}
\author[M. Jovovic]{Mirjana Jovovic}
\address{Department of Mathematics, University of Hawaii, Honolulu, Hawaii 96822}
\email{jovovic@math.hawaii.edu}
\author[W. Smith]{Wayne Smith}
\address{Department of Mathematics, University of Hawaii, Honolulu, Hawaii 96822}
\email{wayne@math.hawaii.edu}

\thanks {}
\subjclass [2000] {Primary: 47B33; Secondary: 47D05, 30A76}
\keywords{Composition semigroups, Besov spaces.
 }

\begin{abstract}
 We study semigroups of
 composition operators acting on the Besov spaces $\B_p$, where they exhibit some new behaviors relative to many classical spaces.  Often for a Banach space $X$ of analytic functions on the unit disk, the maximal closed space of strong continuity, $[ \phi_t, X ]$, exists for every semigroup $\{ \phi_t \}$ of analytic self-maps of the disk, and the question whether $[\phi_t , X ]$ equals $X$ itself has an answer independent of $\{\phi_t\}$.  Such is the case for the Hardy and Bergman spaces, Bloch, BMOA, and $H^{\infty}$.  For the disk algebra $A$, $[\phi_t , A ] = A$ precisely when $\{\phi_t\} \subset A$.  For $\B_p$ with $p \geq 2$, every $\{\phi_t\} \subset \B_p$ and always $[ \phi_t, \B_p ] = \B_p$, but this fails when $1 < p < 2$.  We give an example where $\{\phi_t\} \subset \B_p$ and yet the induced composition operators $\{C_t\}$ are not bounded on $\B_p$ and
 we do not know if $[\phi_t,\B_p]$ exists.  If it does exist,  it cannot be equal to $\B_p$. Under the hypothesis that there is a uniform bound for the operator norms of the $\{C_t\}$, $0 \leq t \leq 1$,
 we characterize the semigroups $\{ \phi_t \}$ such that $[ \phi_t, \B_p ] = \B_p$.
\end{abstract}
\maketitle

\section{Introduction}

We denote by $\D$ the unit disk $\{z : |z| < 1\}$, and by $H(\D)$ the set of analytic functions on $\D$.
 A family $\{\phi_t\}_{t \geq 0}$ of
analytic functions $\phi_t : \D \to \D$ is called a one-parameter semigroup if it satisfies the following three conditions:
\begin{enumerate}
\item [(SG1)] $\phi_0(z)=z$, $z\in \D$;
\item [(SG2)] $\phi_{s+t}=\phi_s\circ\phi_t$, for all $t,s\ge 0$;
\item [(SG3)] the mapping $(t,z)\to\phi_t(z)$ is continuous on $[0,\infty) \times \D$.
\end{enumerate}

We say that $\{\phi_t\}$ is nontrivial unless $\phi_t(z)=z$ for all $t\ge0$.  For any analytic $\phi : \D \to \D$ we define the linear operator $C_\phi : H(\D) \to H(\D)$ by $C_{\phi}(f) = f \circ \phi$.
Given a semigroup $\{\phi_t\}$, we associate a set of linear operators $\{ C_t \}$, where $C_t = C_{\phi_t}$.  Then
$\{ C_t \}$ is called a composition semigroup, as $C_0$ is the identity operator and $C_{s+t}=C_s C_t$.
If, for all $t \geq 0$, $C_t$ is a bounded operator on some Banach space $X\subset H(\D)$, we say that
the semigroup $\{\phi_t\}$ acts on $X$.  If additionally
the strong continuity condition
  \begin{align} \label{SCC}
  \lim_{t \to 0^+} \| f\circ \phi_t - f \|_X = 0
  \end{align}
holds for all $f \in X$, then it is said that $\{\phi_t\}$ \textit{generates} $\{ C_t \}$, and $\{C_t\}$ is strongly continuous on $X$.

Composition semigroups were introduced in \cite{BP}, where it is shown that the $\phi_t$ are univalent and every semigroup generates a composition semigroup on the Hardy spaces $H^p$, $1 \leq p < \infty$.

\subsection{The maximal subspace of strong continuity}
When the semigroup $\{\phi_t\}$ does not generate $\{ C_t \}$ on $X$, it is natural to consider
$$
(\phi_t,X)=\{f\in X\, :\, \lim_{t \to 0^+} \| f\circ \phi_t - f \|_X = 0\},
$$
which is clearly the maximal subspace of  $X$ on which the strong continuity condition (\ref{SCC}) holds.
However, as we will see below, $(\phi_t,X)$ may not be closed and hence may not be a Banach space.
Also, the subspace $(\phi_t,X)$ may not be invariant under
elements of the composition semigroup $\{ C_t \}$.
Hence $\{\phi_t\}$ may not generate a semigroup of operators on $(\phi_t,X)$.

Thus it is also natural to consider the maximal  closed subspace of strong continuity
 on which $\{\phi_t\}$ acts, when it exists.
This was introduced in \cite{Bla} for the case $X$=BMOA, with
 the existence  proved in \cite[Prop 2.1]{Bla}, and denoted by $[\phi_t,\text{BMOA}]$.
Key to the proof is
 that $\sup_{0\le t\le 1}\| C_t \|_{\text{BMOA}}<\infty$ for every composition
 semigroup,  which is used to show that $(\phi_t,\text{BMOA})$ is closed.
That $(\phi_t,\text{BMOA})$ is invariant under each $C_t$ is a consequence of
 every composition operator being bounded on BMOA.

 Here, we formulate a version of the result for a general
 Banach space $X\subset H(\D)$:

\begin{prop} \label{subspSCC}
Suppose the  semigroup $\{\phi_t\}$ acts on a Banach space $X \subset H(\D)$ and
$\sup_{0\le t\le 1}\| C_t \|_{X} <\infty$. Then $(\phi_t,X)$
is a closed subspace of $X$ and $\{\phi_t\}$ generates a semigroup of
operators on $(\phi_t,X)$.  Moreover, $(\phi_t,X)$ is the maximal such subspace of $X$.
\end{prop}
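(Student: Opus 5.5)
Set $M=\sup_{0\le t\le 1}\|C_t\|_X<\infty$. The plan has three steps: show $(\phi_t,X)$ is a closed subspace, show it is invariant under every $C_s$, and show it is maximal.

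\emph{Subspace and closedness.} That $(\phi_t,X)$ is a linear subspace follows from the triangle inequality. For closedness, take $f_n\in(\phi_t,X)$ with $f_n\to f$ in $X$. For $0<t\le1$,
\begin{align*}
\|f\circ\phi_t-f\|_X &\le \|C_t(f-f_n)\|_X+\|f_n\circ\phi_t-f_n\|_X+\|f_n-f\|_X \\
&\le (M+1)\|f-f_n\|_X+\|f_n\circ\phi_t-f_n\|_X .
\end{align*}
Given $\eps>0$, first choose $n$ with $(M+1)\|f-f_n\|_X<\eps/2$. Then choose $t$ small enough that the last term is below $\eps/2$. This gives $f\in(\phi_t,X)$. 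The uniform bound $M$ is used exactly here, and this is the only step where it is genuinely needed.

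\emph{Invariance and the semigroup.} Fix $s\ge0$ and $f\in(\phi_t,X)$. Since $\phi_t\circ\phi_s=\phi_{t+s}=\phi_s\circ\phi_t$ by (SG2),
$$\|C_t(C_sf)-C_sf\|_X=\|C_s(C_tf-f)\|_X\le\|C_s\|_X\,\|f\circ\phi_t-f\|_X\to0 .$$
Here $\|C_s\|_X<\infty$ because the semigroup acts on $X$. So $C_s$ maps $(\phi_t,X)$ into itself. The restrictions $C_t|_{(\phi_t,X)}$ are then bounded operators on a Banach space (closed by the first step). They form a semigroup because $C_0=I$ and $C_{s+t}=C_sC_t$, and they are strongly continuous at $0$ by the very definition of $(\phi_t,X)$. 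Hence $\{\phi_t\}$ generates a semigroup of operators on $(\phi_t,X)$.

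\emph{Maximality.} Let $Y\subset X$ be any closed subspace on which $\{\phi_t\}$ acts and which satisfies (\ref{SCC}) for every $f\in Y$. I take the norm on $Y$ to be the norm of $X$, which is the intended meaning of a subspace of $X$. Then every $f\in Y$ satisfies $\|f\circ\phi_t-f\|_X\to0$, so $f\in(\phi_t,X)$, and therefore $Y\subset(\phi_t,X)$.

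The main obstacle is only the closedness step, because without the uniform bound on $\|C_t\|_X$ for small $t$ the $\eps/2$ argument breaks down. Everything else is formal.
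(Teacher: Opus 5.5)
Your proof is correct and follows the route the paper indicates. The paper omits the details and defers to the BMOA argument it cites, noting that the uniform bound on $\|C_t\|_X$ for $0\le t\le 1$ gives closedness and that boundedness of each $C_s$ gives invariance; you use exactly these two facts, with $C_tC_s=C_sC_t$ for the invariance step, and maximality is immediate from the definition of $(\phi_t,X)$.
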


 The proof is the same as that of \cite[Prop 2.1]{Bla}, and so will be omitted.
Following \cite{Bla}, when this maximal closed subspace exists,
 we denote it by $[\phi_t,X]$.

  \medskip

For some  $X$, $[\phi_t,X]=X$ for all composition semigroups $\{\phi_t\}$.
This is the case for the Hardy spaces $H^p$, $1\le p<\infty$, the weighted
Bergman spaces $A^p_\alpha$, $1\le p<\infty$, $\alpha>-1$, and the Dirichlet space $\Dir$;
see \cite{Sis}.

For some  $X$, $[\phi_t,X]\subsetneqq X$ for all nontrivial composition semigroups $\{\phi_t\}$.
This is the case for  $H^\infty$, BMOA,  and the Bloch space;
see \cite{AJS2}.

For the disk algebra $A$, it is well known that $\{\phi_t\}$ acts on $A$ if and only if
$\{\phi_t\}\subset A$, and moreover it is known that this is also equivalent to $[\phi_t, A]=A$; see \cite{CD}.

In this paper our main purpose is to study the maximal spaces of strong continuity for semigroups
on the classical Besov spaces $\B_p$, $1<p$.
$\B_2$ is the Dirichlet space $\Dir$,  and
$\Dir\subset \B_p$, $2\le p$.  It follows that, for  $2\le p$, every semigroup $\{\phi_t\}\subset\B_p$,
 and we will show that  $[\phi_t, \B_p]=\B_p$.

When $1< p < 2$, the Besov spaces are too small to contain all univalent self maps of $\D$,
and the situation is more complicated.
We will present several examples that show what can happen.
First, we give an example of a semigroup
$\{\phi_t\}$ that does not act on any $\B_p$, $1<p<2$; see
Proposition \ref{compgpProp}.
Our next main result is that for each $p$, $1<p<2$, there is a semigroup $\{\phi_t\}\subset \B_p$ such that
 $(\phi_t,\B_p)$ is dense in $\B_p$,  but $\{\phi_t\}$ does not act on $\B_p$;
see Corollary \ref{PnotB_p}.  We end the paper with an
example of a nontrivial semigroup $\{\psi_t\}$ such that $[\psi_t, \B_p]=\B_p$, $1<p<2$;
see Theorem \ref{[]=B_p}.

\vskip .2in

{\it Notation for constants.}
For $X$ and $Y$ nonnegative quantities, the notations $X\lesssim Y$  and $Y\gtrsim X$
mean that $X\le CY$, where the exact value of the constant $C$ is not important. Similarly,  $X\approx Y$ means that
both $X\lesssim Y$ and $Y\lesssim X$ hold.

\section{Background}

\subsection{Some geometric function theory}

For a simply connected domain $G \subsetneqq \C$ with Riemann map $g: \D \to G$, let $\rho_{G}$ denote the hyperbolic metric on $G.$ For $w_1, w_2 \in G$,
  $$\rho_{G}(w_1,w_2) = \inf_{\gamma}  \int_{\gamma} \frac{2}{1-|z|^2}|dz|,$$
where $\gamma \subset \D$ ranges over curves from $g^{-1}(w_1)$ to $g^{-1}(w_2)$.
This infimum is achieved by a curve $\gamma$, and $\alpha = g(\gamma)$  is called the hyperbolic geodesic or noneuclidean segment from $w_1$ to $w_2$.

We denote the hyperbolic disk centered at $w_0$ with radius $r$ by
$$
\Delta_G(w_0,r) = \{w \in G : \rho_G(w,w_0) < r\}.
$$
and the euclidean disk by
$$
B(z_0,r) = \{z \in \C : |z-z_0| < r\}.
$$
Since the hyperbolic metric is conformally invariant, so are
hyperbolic disks and hyperbolic geodesics.

For a domain $\Omega \subsetneqq \C$ and $w_0 \in \Omega$, we denote the distance to the boundary by
$$
\delta_{\Omega}(w_0) = \inf_{w \in \partial \Omega} |w - w_0|.
$$
As a consequence of the Distortion Theorem, we have that if $g:\D\to G$ is
a conformal map, then
\begin{align} \label{delta}
(1-|z|^2)|g'(z)|/4\le \delta_G(g(z))\le (1-|z|^2)|g'(z)|;
\end{align}
see \cite [Cor 1.4]{Pom}.

The following simple consequence will also be useful.

\begin{lem} \label{distortion2}
Let $f: \Omega_1 \to \Omega_2$ be a conformal map between  simply connected domains
and let $a \in \Omega_1$. Then
\begin{align*}
|f'(a)| \approx \delta_{\Omega_2} (f(a))/\delta_{\Omega_1} (a).
\end{align*}
\end{lem}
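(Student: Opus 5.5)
The plan is to reduce to Riemann maps of the unit disk and apply the Distortion estimate (\ref{delta}) twice, using the chain rule to transfer it. Since $\Omega_1$ and $\Omega_2$ are simply connected proper subdomains of $\C$ (they must be proper, or $\delta$ is not finite), the Riemann mapping theorem gives a conformal map $g_1:\D\to\Omega_1$. Composing with a disk automorphism, we may normalize so that $g_1(0)=a$. Then $g_2 = f\circ g_1 : \D\to\Omega_2$ is also a conformal map onto $\Omega_2$, with $g_2(0)=f(a)$.

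Apply (\ref{delta}) to $g_1$ at $z=0$. This gives
\[
|g_1'(0)|/4 \le \delta_{\Omega_1}(a) \le |g_1'(0)|.
\]
Applying it to $g_2$ at $z=0$ gives
\[
|g_2'(0)|/4\le \delta_{\Omega_2}(f(a))\le |g_2'(0)|.
\]
By the chain rule, $g_2'(0)=f'(a)g_1'(0)$, and $g_1'(0)\neq 0$ by univalence. Dividing the two estimates then yields
\[
\tfrac14 |f'(a)| \le \delta_{\Omega_2}(f(a))/\delta_{\Omega_1}(a)\le 4|f'(a)|.
\]
This is the claimed comparability, with absolute constants that do not depend on $f$, $a$, or the domains.

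There is no real obstacle here. The only points needing care are the normalization $g_1(0)=a$, which is needed so that the point where (\ref{delta}) is applied is the same for both maps, and the implicit assumption that the domains are proper subsets of $\C$. Both are immediate.
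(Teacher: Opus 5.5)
Your proof is correct and matches the paper's argument: both take a Riemann map $h_1:\D\to\Omega_1$ with $h_1(0)=a$, set $h_2=f\circ h_1$, and apply (\ref{delta}) at $z=0$ to both maps together with the chain rule. Your explicit constants $\tfrac14$ and $4$ follow correctly from (\ref{delta}).
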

\begin{proof}
Let $h_1:\D\to \Omega_1$ be a Riemann map with $h_1(0)=a$ and let $h_2=f\circ h_1$.
Then $|f'(a)|=|h_2'(0)|/|h_1'(0)|\approx \delta_{\Omega_2} (f(a))/\delta_{\Omega_1} (a)$,
by (\ref{delta}).
\end{proof}

Next, we state a lemma from \cite{GP} that provides a geometric estimate
of the hyperbolic distance on a simply connected domain.
\begin{lem}\label{DistLem}\cite[Lemma 2.1]{GP}; see also \cite[\S 9.5]{Sh}.
Let $G$ be a simply connected proper subset of $\C$ and let $w_1, w_2\in G$.  Then
$$
\rho_G(w_1,w_2)\ge\frac 12\log\left(1+\frac{|w_1-w_2|}{\min\{\delta_G(w_1),\delta_G(w_2)\}}\right).
$$
\end{lem}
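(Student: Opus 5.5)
By conformal invariance, the statement reduces to an estimate in the unit disk. Let $g:\D\to G$ be a Riemann map with $g(0)=w_1$ (or whichever of $w_1,w_2$ has the smaller $\delta_G$), and set $z_2=g^{-1}(w_2)$. Then $\rho_G(w_1,w_2)=\rho_\D(0,z_2)=\log\frac{1+|z_2|}{1-|z_2|}$. It therefore suffices to bound $|w_1-w_2|=|g(z_2)-g(0)|$ from above in terms of $\delta_G(w_1)$ and $|z_2|$. The tool is the growth theorem for univalent functions, combined with (\ref{delta}).

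Without loss of generality, assume $\delta_G(w_1)\le \delta_G(w_2)$, so the minimum in the statement is $\delta_G(w_1)$. Apply the growth theorem to the normalized map $(g-g(0))/g'(0)$:
$$|g(z)-g(0)|\le |g'(0)|\frac{|z|}{(1-|z|)^2}.$$
By (\ref{delta}) at $z=0$ we have $|g'(0)|\le 4\delta_G(w_1)$. Writing $r=|z_2|$, this gives
$$1+\frac{|w_1-w_2|}{\delta_G(w_1)}\le 1+\frac{4r}{(1-r)^2}=\frac{(1+r)^2}{(1-r)^2}.$$
Taking logarithms,
$$\tfrac12\log\Big(1+\frac{|w_1-w_2|}{\delta_G(w_1)}\Big)\le \log\frac{1+r}{1-r}=\rho_G(w_1,w_2),$$
which is exactly the claim, with the constant $\tfrac12$.

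The only delicate point is matching the normalization of the metric, namely the density $2/(1-|z|^2)$, which gives $\rho_\D(0,r)=\log\frac{1+r}{1-r}$, against the Koebe constant $4$. The identity $(1-r)^2+4r=(1+r)^2$ makes the constants fit exactly, so no loss occurs.

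We must also check that basing at the point with the smaller $\delta_G$ is allowed. This is fine: the growth bound holds from either base point, and using the smaller $\delta_G$ only weakens the conclusion we need. The minimum in the statement is precisely what this choice of base point produces.
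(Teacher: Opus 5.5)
Your proof is correct. With $g(0)=w_1$, the bound $|g'(0)|\le 4\delta_G(w_1)$ is exactly (\ref{delta}) at $z=0$. The growth theorem then gives $1+|w_1-w_2|/\delta_G(w_1)\le (1+r)^2/(1-r)^2$, and $\log\frac{1+r}{1-r}$ is $\rho_\D(0,r)$ in the paper's normalization. The Koebe function $z/(1-z)^2$, with $w_1=0$ and $w_2$ on the positive axis, gives equality, so the constant $\tfrac12$ cannot be improved.

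One correction to your last paragraph. The inequality with the smaller of $\delta_G(w_1),\delta_G(w_2)$ in the denominator is the \emph{stronger} of the two inequalities your argument yields (one for each base point), not a weakening. The choice is legitimate simply because the argument works from either base point and both sides are symmetric in $w_1,w_2$.

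The paper gives no proof, only a citation to \cite[Lemma 2.1]{GP}. The argument usually given for this estimate is organized differently, in two steps.
\begin{itemize}
\item First compare densities. By (\ref{delta}), the hyperbolic density of $G$ at $w=g(z)$ is $2/\bigl((1-|z|^2)|g'(z)|\bigr)\ge 1/(2\delta_G(w))$, so $\rho_G\ge\tfrac12 k_G$, where $k_G$ is the quasihyperbolic distance with density $1/\delta_G$.
\item Then use that $\delta_G$ is $1$-Lipschitz. Along any curve starting at $w_1$ and parametrized by arclength $s$, one has $\delta_G\le\delta_G(w_1)+s$. Its quasihyperbolic length is therefore at least $\int_0^{|w_1-w_2|}ds/(\delta_G(w_1)+s)=\log\bigl(1+|w_1-w_2|/\delta_G(w_1)\bigr)$.
\end{itemize}

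Your route is a single global estimate at one base point and exhibits the extremal case directly, but it relies on the full growth theorem. The quasihyperbolic route uses simple connectivity only through the Koebe $\tfrac14$ bound on the density. Its second step, $k_G\ge\log\bigl(1+|w_1-w_2|/\min\{\delta_G(w_1),\delta_G(w_2)\}\bigr)$, is elementary and holds for every proper subdomain of $\C$, and even of $\mathbb{R}^n$.
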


Denote by $\ell(\gamma)$ the arclength of a rectifiable curve $\gamma\subset\C$, and for $s>0$
define
$$
\gamma[s] = \{z\in\C\,:\, \delta_{\widetilde\gamma}(z)<s\},
$$
where $\widetilde\gamma$ denotes the complement of $\gamma$.
We let
$A(E)$ denote the
area of a measurable set $E$, and use $dA$ for integration with respect to Lebesgue area measure.

\begin{lem} \label{estDeltaInt}
Let $\gamma$ be a rectifiable curve,  $J$ be a line segment, let $s>0$, and let $q>-1$.
Then the following estimates hold:
$$
\int_{\gamma[s]}\delta_{\widetilde\gamma}^q(z)\, dA(z)\lesssim s^{q+1}(\ell(\gamma)+s);
$$
and,
$$
\int_{J[s]}\delta_{\widetilde J}^q(z)\, dA(z)\approx s^{q+1}(\ell(J)+s).
$$
\end{lem}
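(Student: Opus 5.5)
The plan is to integrate in "layers" via the coarea formula. Set $u(z)=\delta_{\widetilde\gamma}(z)$, the distance from $z$ to $\gamma$. This function is $1$-Lipschitz and satisfies $|\nabla u|=1$ almost everywhere off $\gamma$. Since $\gamma$ has area zero, the coarea formula gives
$$
\int_{\gamma[s]}u^q\,dA=\int_0^s r^q\,\mathcal H^1(\{u=r\})\,dr .
$$
The problem therefore reduces to bounding the length of the level sets $\{u=r\}$. This part is the main obstacle.

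\textbf{Upper bound for a general rectifiable $\gamma$.} It is cleaner to avoid level-set lengths and bound the areas $A(\gamma[r])\lesssim r(\ell(\gamma)+r)$ instead. To do this, cover $\gamma$ by $N\le \ell(\gamma)/r+1$ points spaced at arclength at most $r$ along $\gamma$. Every point of $\gamma[r]$ then lies within $2r$ of one of these points, so
$$
A(\gamma[r])\le N\cdot 4\pi r^2\lesssim r(\ell(\gamma)+r).
$$
Next use the layer-cake formula, writing the integral over $\gamma[s]$ as a sum over dyadic shells $\gamma[2^{-k}s]\setminus\gamma[2^{-k-1}s]$, $k\ge0$. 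On the $k$-th shell, $u^q\approx (2^{-k}s)^q$; this holds for either sign of $q$. The area of the shell is at most $2^{-k}s(\ell(\gamma)+2^{-k}s)$, so the total is bounded by
$$
\sum_{k\ge0}(2^{-k}s)^{q+1}\bigl(\ell(\gamma)+2^{-k}s\bigr)\lesssim s^{q+1}(\ell(\gamma)+s).
$$
The series converges because $q+1>0$, and this is exactly where the hypothesis $q>-1$ enters.

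\textbf{Segment case.} The upper bound for $J$ is the special case above. For the lower bound, $J[s]$ contains a rectangle of length $\ell(J)$ and width $2s$, together with a half-disk of radius $s$ at each end. In the rectangle, $u$ is the distance to the axis, so its contribution is
$$
2\ell(J)\int_0^s r^q\,dr\approx s^{q+1}\ell(J).
$$
The half-disks contribute $\int_0^s r^q\,\pi r\,dr\approx s^{q+2}$. Adding the two pieces gives $\gtrsim s^{q+1}(\ell(J)+s)$. The only delicate point in the whole argument is the covering estimate for general rectifiable curves, and that is handled by the arclength spacing.
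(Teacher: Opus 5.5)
Your argument is correct, and your upper bound is exactly the paper's proof once you drop the coarea preamble, which you never actually use: cover $\gamma[t]$ by at most $1+\ell(\gamma)/t$ disks of radius $2t$ to get $A(\gamma[t])\lesssim t(\ell(\gamma)+t)$, then sum over dyadic shells, with convergence coming from $q+1>0$. The only difference is the segment lower bound: the paper says the same dyadic argument works because $A(J[t])\approx t(\ell(J)+t)$, whereas you compute the integral over the stadium exactly as $\frac{2\ell(J)s^{q+1}}{q+1}+\frac{2\pi s^{q+2}}{q+2}$, which is more explicit and gives both directions at once.
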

\begin{proof}
First, since $\gamma[t]$ can be covered by at most $1+\ell(\gamma)/t$ disks of
radius $2t$, we see that
$$
A(\gamma[t])\lesssim t(\ell(\gamma)+t), \quad t>0.
$$
  Hence
\begin{align*}
\int_{\gamma[s]}\delta_{\widetilde\gamma}^q(z)\, dA(z)
&= \sum_{n=0}^\infty
 \int_{\gamma[2^{-n}s]\setminus \gamma[2^{-n-1}s] }\delta_{\widetilde\gamma}^q(z)\, dA(z) \\
  &\lesssim \sum_{n=0}^\infty (2^{-n}s)^q \,2^{-n}s(\ell(\gamma)+2^{-n}s),
\end{align*}
and summing the geometric series completes the proof of the first estimate.  The proof of
the second is the same, except that now $A(J[t])\approx t(\ell(J)+t)$.
\end{proof}

We will need two results from univalent function theory, which we state here as lemmas.
The first is a result of Gehring and Hayman which says that hyperbolic geodesics are not much
longer than euclidean geodesics.

\begin{lem}\cite[Theorem 4.20]{Pom}  \label{GehringHayman}
Let $C$ be a curve in a simply connected domain $G \subsetneqq \C$ and $\gamma$ the noneuclidean segment in $G$ with the same endpoints as $C$.  There exists a universal constant $K$ such that  $\ell(\gamma) \leq K \ell(C)$.
\end{lem}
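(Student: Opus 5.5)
The plan is the classical Gehring--Hayman argument: pull everything back to the disk, cut the geodesic into Whitney pieces, and show that any competing curve must spend comparable length near each piece. By conformal invariance of hyperbolic geodesics, I will fix a Riemann map $g:\D\to G$ normalized (after a disk automorphism) so that the noneuclidean segment is $\gamma=g([0,r])$ with $0<r<1$. I will split $[0,r]$ into the dyadic pieces $I_n=[1-2^{-n},\,1-2^{-n-1}]\cap[0,r]$, $n\ge 0$, and let $x_n=1-2^{-n}$. On $I_n$ the quantities $1-|z|$ and $|g'(z)|$ are comparable to their values at $x_n$, by the Koebe distortion theorem. Together with (\ref{delta}) this gives
$$
\ell(g(I_n))\approx (1-x_n)|g'(x_n)|\approx \delta_G(g(x_n))=:d_n ,
\qquad\text{so}\qquad \ell(\gamma)\approx \sum_n d_n .
$$
It therefore suffices to prove $\ell(C)\gtrsim \sum_n d_n$ for every curve $C$ in $G$ joining $g(0)$ to $g(r)$.

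For each $n$, let $S_n$ be the hyperbolic geodesic of $\D$ through $x_n$ orthogonal to the real axis. This is a crosscut of $\D$ separating $0$ from $r$ whenever $x_n<r$. Hence $g(S_n)$ is a crosscut of $G$, and any such $C$ must meet $g(S_n)$ at some point $w_n$. The aim is to extract disjoint portions of $C$, each of length $\gtrsim d_n$, for $n$ in an arithmetic progression $n\equiv j \pmod k$ with $k$ a fixed universal integer. Averaging over the residues $j$ then yields the bound with a universal constant. The required input is the separation estimate
$$
\operatorname{dist}\bigl(g(S_n),\,g(S_{n+k})\bigr)\gtrsim d_n .
$$
Granting it, the subarc of $C$ between $w_n$ and $w_{n+k}$ has length $\gtrsim d_n$. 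These subarcs can be chosen disjoint by taking successive first hitting times along $C$.

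The separation estimate is the main obstacle, because $g(S_n)$ may be long and wander far from $g(x_n)$. I would prove it by extremal length. Consider the quadrilateral in $\D$ bounded by $S_n$, $S_{n+k}$ and two arcs of $\partial\D$. Because $S_n$ and $S_{n+k}$ are hyperbolically far apart, a distance of order $k$, the family $\Gamma$ of curves joining $S_n$ to $S_{n+k}$ inside this quadrilateral has modulus $\lesssim 1/k$, which is small. Conformal invariance transfers this bound to the image family in $G$. Suppose instead that there were points $a\in g(S_n)$ and $b\in g(S_{n+k})$ with $|a-b|\le \eps d_n$. Since $g(S_n)$ contains the point $g(x_n)$, it has diameter $\gtrsim d_n$, and so does $g(S_{n+k})$ near its own base point; here Lemma \ref{DistLem} and Koebe control the relative sizes. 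A Teichm\"uller/Beurling-type lower bound for the modulus of curves joining two continua of diameter $\gtrsim d_n$ at mutual distance $\le\eps d_n$ would then make the modulus of $g(\Gamma)$ large, namely $\gtrsim \log(1/\eps)$. This contradicts the $1/k$ bound once $\eps$ is small enough, with $\eps$ and $k$ universal.

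The delicate point is that the curves in that comparison family must stay inside $G$ and inside the image quadrilateral. I would handle this by first restricting to the hyperbolic neighborhoods $\Delta_G(g(x_n),R)$, on which $G$ contains a disk of radius $\approx d_n$ around $g(x_n)$. The final step is to check that the residual case $C\subset g(\Delta_\D(x_n,R))$ contributes only bounded multiples of $d_n$.
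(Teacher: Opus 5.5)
The paper does not prove this lemma; it quotes it from \cite[Theorem 4.20]{Pom}. Much of your outline is sound: the Whitney pieces of $\gamma$ with $\ell(\gamma)\lesssim\sum_n d_n$, the nested crosscuts $g(S_n)$, disjoint subarcs of $C$ via first hitting times, and averaging over residues mod $k$. The gap is the step everything rests on, $\mathrm{dist}(g(S_n),g(S_{n+k}))\gtrsim d_n$ with euclidean distance, which is false for every fixed $k$. Let $G=\C\setminus(-\infty,0]$ and $F(z)=z^2$, which maps the right half-plane $\{\mathrm{Re}\,z>0\}$ onto $G$. For small $\eps>0$, let $\sigma_+$ be the half-plane geodesic (a semicircle) from $is$ to $is(1+\eps)$, and let $\sigma_-=\overline{\sigma_+}$. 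These two geodesics have the common perpendicular $|z|=s\sqrt{1+\eps}$. Their hyperbolic distance tends to $\infty$ as $\eps\to0$ (it is about $2\log(4/\eps)$) and to $0$ as $\eps\to\infty$. The images under $F$ of the two feet lie at distance $\approx s^2\eps$ from $\partial G$. Choose $\eps$ so that this hyperbolic distance equals $\rho_\D(x_n,x_{n+k})$. Then choose $g$ so that $g([0,r])$ lies on $F(\{|z|=s\sqrt{1+\eps}\})$, with $g(x_n)$ and $g(x_{n+k})$ the images of the feet. Now $g(S_n)=F(\sigma_+)$ and $g(S_{n+k})=F(\sigma_-)$ both tend to the slit point $-s^2$, one from above and one from below. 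So their euclidean distance is $0$, while $d_n\approx s^2\eps>0$.

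This is exactly where your extremal-length sketch breaks. The Teichm\"uller-type lower bound $\gtrsim\log(1/\eps)$ applies to curves in the plane joining the two continua. The curves of $g(\Gamma)$, however, must stay in $g(Q)\subset G$, and $\partial G$ (here the slit) can separate two nearly touching crosscuts. Localizing to $\Delta_G(g(x_n),R)$ does not help, because the near contact happens at the ends of the crosscuts, hyperbolically far from $g(x_n)$. In the example the \emph{inner} distance in $G$ between the two crosscuts is $\approx s^2\gg d_n$. The inner-distance version of your estimate is also all the chaining needs, since the subarc of $C$ lies in $G$, and it is true. But the natural way to prove it is to compare a short connecting curve with the hyperbolic geodesic joining its endpoints, which invokes the Gehring--Hayman inequality itself. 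So the central step needs a genuinely different argument, and as it stands the proposal does not prove the lemma.
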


The second lemma, stating that euclidean disks are hyperbolically convex, is related to
results of J\o rgensen and Pommerenke; see \cite[Exercise 4.6.1]{Pom}.

\begin{lem}  \label{Jorgensen}
Let $B$ be an open euclidean disk in a domain $G \subsetneqq \C$.  If $\gamma$ is a noneuclidean segment of $G$ with endpoints in $B$, then $\gamma \subset B$.
\end{lem}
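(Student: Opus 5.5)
The plan is to reduce to the unit disk by conformal invariance and then use the elementary fact that euclidean disks in $\D$ are hyperbolically convex there. Let $g:\D\to G$ be a Riemann map. The noneuclidean segment $\gamma$ is $g(\sigma)$, where $\sigma$ is the hyperbolic geodesic in $\D$ joining $g^{-1}$ of its endpoints. However, $g^{-1}(B)$ is not a disk, so pulling back does not immediately work. I would therefore avoid the Riemann map of $G$ and instead compare $G$ with the smaller domain $B$ itself, using the monotonicity of the hyperbolic metric (Schwarz--Pick): since $B\subset G$, we have $\lambda_G\le\lambda_B$ pointwise for the densities.

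The key steps are as follows. First, I would normalize by a similarity so that $B=\D$; this is harmless because noneuclidean segments and euclidean disks are preserved by affine maps. Next, it suffices to show the following. If $\gamma$ is a $\rho_G$-geodesic with endpoints $w_1,w_2\in\D$ and $\gamma$ leaves $\D$, then we can produce a strictly $\rho_G$-shorter curve with the same endpoints, contradicting minimality. The natural candidate is the radial projection (or inversion) of the part of $\gamma$ outside $\D$ onto $\D$. Concretely, let $\gamma_0$ be an arc of $\gamma$ lying outside $\overline{\D}$ with endpoints $a,b$ on $\partial\D$. I would replace $\gamma_0$ by its reflection $\gamma_0^*$ under $z\mapsto 1/\bar z$, or, more safely, by the arc of $\partial\D$ or a chord. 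I would then need to check that $\rho_G$-length does not increase.

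The main obstacle is this comparison, since $\lambda_G$ is not known explicitly. What I would try first is to treat the complement $\C\setminus G$ as containing at least one point $p$ outside $\overline{B}$, as a nondegenerate continuum reaching infinity (because $G$ is simply connected and proper). I would then use the quasi-hyperbolic comparison $\lambda_G(z)\approx 1/\delta_G(z)$ from (\ref{delta}). This, however, only gives an inequality up to constants, which is not enough for exact convexity. So instead I would prove the statement first for $G$ a disk or a half-plane, where geodesics are circular arcs orthogonal to $\partial G$ and the claim is elementary geometry. I would then try a symmetrization or reflection argument: reflecting in the circle $\partial B$ maps the outside piece into $B$ and, by the Schwarz reflection-type inequality $\lambda_G(z^*)|dz^*|\le \lambda_G(z)|dz|$, does not increase length. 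That inequality is exactly where I expect the difficulty to lie. If it fails in general, I would fall back on citing \cite[Exercise 4.6.1]{Pom} and the J\o rgensen--Pommerenke result directly.
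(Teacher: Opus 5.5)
Your proposal has a genuine gap at its only nontrivial step. The argument hinges on the inequality $\lambda_G(R(z))\,|R'(z)|\le\lambda_G(z)$ for $z\in G\setminus\overline B$, where $R$ is the reflection in $\partial B$. You neither prove it nor commit to its truth. Your fallback, citing Pommerenke's Exercise 4.6.1, is exactly what the paper does, since it gives no proof of this lemma. So as an argument, your proposal does not yet go beyond the citation.

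The tools you list cannot produce the inequality:
\begin{itemize}
\item Schwarz--Pick gives $\lambda_G\le\lambda_B$ only at points of $B$, but you need to control $\lambda_G$ at points outside $B$.
\item The comparison $\lambda_G\approx 1/\delta_G$ loses constants, as you observe.
\item Settling the case of disks and half-planes gives no mechanism for passing to general $G$.
\end{itemize}
Note also that a replacement curve which is merely not longer does not by itself contradict minimality. You need either strict inequality or uniqueness of the minimizing curve.

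The missing observation is that $R$ maps $G\setminus\overline B$ into $B\subset G$. This alone makes the inequality true for every such $G$; it is essentially Minda's reflection principle for the hyperbolic metric.

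First shrink $B$ to a concentric disk $B'=B(c,r)$ with $\overline{B'}\subset B$ still containing both endpoints, and let $R(z)=c+r^2/\overline{(z-c)}$. On $E=G\setminus\overline{B'}$ set $\mu(z)=\lambda_G(R(z))\,r^2|z-c|^{-2}$. This is the hyperbolic density of the anticonformal image $R(G)\supset E$. Hence both $\log\lambda_G$ and $\log\mu$ satisfy the curvature equation $\Delta\log\lambda=\lambda^2$, with $\Delta$ the Laplacian and the paper's normalization.

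Let $u=\log\lambda_G-\log\mu$ on $E$. Its boundary behaviour is as follows.
\begin{itemize}
\item $u=0$ on $\partial B'$.
\item $u\to+\infty$ at $\partial G$: $\lambda_G\ge 1/(2\delta_G)$ by (\ref{delta}), while $\mu$ stays bounded there because $R(\partial G)\subset B'\subset G$.
\item $u\to+\infty$ at $\infty$: $\mu(z)=O(|z|^{-2})$, while $\lambda_G(z)\ge 1/(2|z-a|)$ for a fixed $a\notin G$.
\end{itemize}
So if $\inf_E u<0$, it is attained at some $z_0\in E$. There $0\le\Delta u(z_0)=\lambda_G(z_0)^2-\mu(z_0)^2<0$, which is absurd. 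Hence $\mu\le\lambda_G$ on $E$.

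Now define $\gamma^*(s)=\gamma(s)$ when $\gamma(s)\in\overline{B'}$ and $\gamma^*(s)=R(\gamma(s))$ otherwise. This curve lies in $\overline{B'}$, has the same endpoints, and has $\rho_G$-length at most that of $\gamma$. So it also realizes $\rho_G(w_1,w_2)$. Pulling back by the Riemann map, any curve realizing hyperbolic distance in $\D$ has the geodesic arc as its trace. Thus $\gamma$ and $\gamma^*$ have the same trace, and $\gamma\subset\overline{B'}\subset B$.

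With this step supplied, your reflection route becomes a self-contained proof of what the paper only cites.
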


We now use these lemmas to prove the next result, which says that
the pullback of a noneuclidean segment contained in a euclidean subdisk of a domain
satisfies a chord-arc inequality.

\begin{lem} \label{chordarc}
Let $\Omega \subsetneqq \C$ be a simply connected domain with Riemann map $h: \D \to \Omega$.  Let $B$ be a euclidean disk in $\Omega$.  Suppose $\eta$ is a noneuclidean segment of $B$ with endpoints $w_1$ and $w_2$.  There exists a universal constant $K_1$ such that
  $$\ell(h^{-1}(\eta)) \leq K_1 |h^{-1}(w_2) - h^{-1}(w_1)|.$$
\end{lem}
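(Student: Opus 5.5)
The plan is to compare $h^{-1}(\eta)$ with the hyperbolic geodesic of $\D$ joining $z_1=h^{-1}(w_1)$ and $z_2=h^{-1}(w_2)$, using Lemma \ref{GehringHayman} twice and Lemma \ref{Jorgensen} once. Write $\Gamma$ for the noneuclidean segment of $\Omega$ from $w_1$ to $w_2$. By conformal invariance, $h^{-1}(\Gamma)$ is the noneuclidean segment of $\D$ from $z_1$ to $z_2$. That segment is an arc of a circle orthogonal to $\partial\D$ (or a diameter), so its length is at most a universal constant times $|z_2-z_1|$: such an arc subtends an angle less than $\pi$. Hence it suffices to show
$$\ell(h^{-1}(\eta)) \lesssim \ell(h^{-1}(\Gamma)).$$

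To prove this reduction I apply Lemma \ref{GehringHayman} in the domain $G=h^{-1}(B)\subset \D$, which is simply connected and proper. The curve $h^{-1}(\eta)$ is the noneuclidean segment of $G$ joining $z_1$ and $z_2$, by conformal invariance of geodesics under $h^{-1}:B\to G$. For the comparison curve $C$ I take $h^{-1}(\Gamma)$. This is legitimate provided $C\subset G$, i.e.\ $\Gamma\subset B$. That containment is exactly Lemma \ref{Jorgensen}: $B$ is an open euclidean disk in $\Omega$ and $\Gamma$ is a noneuclidean segment of $\Omega$ with endpoints $w_1,w_2\in B$, so $\Gamma\subset B$. (If $w_1$ or $w_2$ lies on $\partial B$, approximate by interior points and pass to the limit, or simply note that a noneuclidean segment of $B$ has its endpoints in $B$.) Lemma \ref{GehringHayman} then gives $\ell(h^{-1}(\eta))\le K\,\ell(h^{-1}(\Gamma))$.

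Combining the two bounds gives
$$\ell(h^{-1}(\eta))\le K\,\ell(h^{-1}(\Gamma))\le K\cdot\tfrac{\pi}{2}|z_2-z_1|.$$
The factor $\pi/2$ comes from the following check. A hyperbolic geodesic of $\D$ between two points is an arc of a circle orthogonal to the unit circle, and the arc inside $\D$ subtends central angle at most $\pi$. An arc of angle $\theta\le\pi$ has length $R\theta$ and chord $2R\sin(\theta/2)\ge 2R\theta/\pi$, so the length is at most $\frac{\pi}{2}$ times the chord. This step also covers the case where the geodesic is a diameter segment, for which length equals chord.

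The main point to get right is the middle step, namely choosing the domain in which to apply Gehring–Hayman. The domain must be $h^{-1}(B)$, with the geodesic of $\D$ serving as the competitor curve. This in turn requires Lemma \ref{Jorgensen} to place that competitor inside $h^{-1}(B)$. The remaining steps are routine.
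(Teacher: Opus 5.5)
Your proposal is correct and follows the paper's proof essentially step for step. Lemma \ref{Jorgensen} places the $\Omega$-geodesic $\Gamma$ inside $B$, and Lemma \ref{GehringHayman}, applied in $h^{-1}(B)$ with competitor curve $h^{-1}(\Gamma)$, bounds $\ell(h^{-1}(\eta))$ by $K\,\ell(h^{-1}(\Gamma))$; the $\frac{\pi}{2}$ chord--arc bound for geodesics of $\D$ then gives $K_1=\frac{\pi}{2}K$, exactly as in the paper.
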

\begin{proof}
Define $z_1 = h^{-1}(w_1)$ and $z_2 = h^{-1}(w_2)$, points in $\D$.  The noneuclidean segment of $\D$ that connects $z_1$ and $z_2$ is a radial segment or arc of a circle. Its image $\gamma$ under $h$ is a noneuclidean segment of $\Omega$, and
  $$\ell(h^{-1}(\gamma)) \leq \frac{\pi}{2} |z_1-z_2|.$$
Note that $\eta$ and $\gamma$ have the same endpoints, but $\eta$ is a noneuclidean segment of $B$ and $\gamma$ is a noneuclidean segment of the larger domain $\Omega$. Denote the restriction of $h^{-1}$ to $B$
  $$\phi = h^{-1} |_{B}.$$
The curve $\phi(\eta)$ is  a noneuclidean segment of the domain $\phi(B)$ with endpoints $z_1$ and $z_2$.  By Lemma \ref{Jorgensen}, $\gamma$ is also contained in $B$, so $\phi(\gamma)$ is contained in $\phi(B)$. By Lemma \ref{GehringHayman}
  $$\ell(\phi(\eta)) \leq K \ell(\phi(\gamma)).$$  Thus
  $$\ell(\phi(\eta)) \leq \frac{\pi}{2} K |z_1-z_2|.$$
\end{proof}

Next, we formulate a version of \cite[Cor. 1.5]{Pom} that will be
convenient for later application.

\begin{lem} \label{distortion} There exists $C>1$ such that, for any 
simply connected domain $G\subsetneqq \C$, any Riemann map
$h: \D \to G$, and any $w \in G$,
\begin{align*}
B(w,\delta_G(w)/C) \subset \Delta_G(w,1) \subset B(w,C\delta_G(w)).
\end{align*}
  Also, for all $z_0 \in \D$,
\begin{align*}
|h'(z)| \approx |h'(z_0)|, \qquad z \in \Delta_\D(z_0,1),
\end{align*}
with constants of comparison independent of $z_0$.
\end{lem}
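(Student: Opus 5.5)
The plan is to reduce both assertions to statements about the unit disk by conformal invariance, and then to invoke the classical Koebe distortion estimates through (\ref{delta}). Fix $w\in G$ and, composing $h$ with a disk automorphism, choose a Riemann map $g:\D\to G$ with $g(0)=w$. Since the hyperbolic metric is conformally invariant, $\Delta_G(w,1)=g(\Delta_\D(0,1))$. With the normalization $2|dz|/(1-|z|^2)$, the hyperbolic disk $\Delta_\D(0,1)$ is the euclidean disk $\{|z|<r_0\}$, where $r_0=\tanh(1/2)<1$ is a fixed absolute constant.

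For the first inclusion chain, apply the Koebe distortion and growth theorems to the normalized univalent function $(g-w)/g'(0)$ on $|z|\le r_0$. This gives, for $|z|=r\le r_0$,
$$\frac{r}{(1+r)^2}|g'(0)|\le |g(z)-w|\le \frac{r}{(1-r)^2}|g'(0)|.$$
Hence $g(\{|z|<r_0\})$ contains $B(w, r_0|g'(0)|/(1+r_0)^2)$ by the Koebe covering argument, since the image of the disk of radius $r_0$ contains the disk of radius equal to the minimum modulus on its boundary circle. It is also contained in $B(w, r_0|g'(0)|/(1-r_0)^2)$. Finally, (\ref{delta}) at $z=0$ gives $|g'(0)|/4\le\delta_G(w)\le|g'(0)|$. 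Substituting yields both inclusions with an absolute $C$ depending only on $r_0$.

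For the second statement, fix $z_0\in\D$ and let $\tau(z)=(z+z_0)/(1+\bar z_0 z)$, so that $\tau$ maps $\Delta_\D(0,1)=\{|\zeta|<r_0\}$ onto $\Delta_\D(z_0,1)$. Set $k=h\circ\tau$, which is univalent on $\D$. The distortion theorem gives $|k'(\zeta)|\approx|k'(0)|$ for $|\zeta|<r_0$, with constants $(1\pm r_0)^{\pm3}$. Also $|\tau'(\zeta)|=(1-|z_0|^2)/|1+\bar z_0\zeta|^2$, which is comparable to $1-|z_0|^2=|\tau'(0)|$ uniformly for $|\zeta|\le r_0$, with constants $(1\pm r_0)^{\pm2}$. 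Writing $z=\tau(\zeta)$ and using $h'(z)=k'(\zeta)/\tau'(\zeta)$, we get $|h'(z)|\approx |k'(0)|/|\tau'(0)|=|h'(z_0)|$, with constants independent of $z_0$.

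No step is a real obstacle. The only points needing care are the normalization of the hyperbolic metric, which fixes $r_0$, and checking that the automorphism factor is uniformly comparable on the hyperbolic disk. An alternative is to bound $|\log h'|$ along hyperbolic geodesics using $|h''/h'|\le 6/(1-|z|^2)$, but the route above is more direct.
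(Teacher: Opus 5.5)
Your proof is correct, and it is essentially the paper's approach: the paper gives no argument of its own and presents the lemma as a version of Pommerenke's Corollary 1.5. That corollary rests on the same ingredients you use, namely the Koebe growth and distortion estimates at the origin transported by a disk automorphism, with $|g'(0)|$ converted to $\delta_G(w)$ via (\ref{delta}).
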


\medskip

\subsection{The Besov spaces $\B_p$}
 For each $p > 1$, the Besov space $\B_p$ is the Banach space in $H(\D)$ with semi-norm
  $$\| f \|_p =  \left(\int_{\D} |f'(z)|^p (1 - |z|^2)^{p-2} \, dA(z)\right)^{1/p}.$$
The Dirichlet space is $\B_2$, and $p < q$ implies $\B_p \subset \B_q$ (see Ex. 5.36 in \cite{Zh}), which, with the closed graph theorem,  implies
  \begin{align} \label{pqnorms}
  \| f \|_q \lesssim \| f \|_p.
  \end{align}

In our proofs we will also need to use Carleson measures for Besov spaces.
For an arc $I \subseteq \partial \D$, define the \textit{Carleson square} associated with $I$ to be
  $$S(I) = \{ re^{i\theta}: 1-\ell(I) < r < 1, e^{i\theta} \in I \}.$$
A positive measure $\mu$ on $\D$ is called a \textit{$\B_p$-Carleson measure} if
 \begin{align*}
 \sup_{f \in \B_p} \frac{\|f'\|_{L^p(\mu)} }{\|f\|_{p}}= A_\mu  <\infty.
 \end{align*}

In \cite[Theorem 13]{AFP}, it is shown that $\mu$ is a $\B_p$-Carleson measure if and only if
\begin{align}\label{CarlConst}
\sup_{I\subset \partial \D} \frac{\mu(S(I))}{\ell(I)^p}= B_\mu  <\infty,
\end{align}
and moreover,  $A_\mu^p \approx B_\mu$.  The constants $A_\mu$ and $B_\mu$ are called Carleson
constants for $\mu$.

 Applying (\ref{delta}), a composition operator $C_\phi$
induced by a univalent selfmap $\phi$ of $\D$
is bounded on $\B_p$ if there exists a constant $C$ such that
\begin{align}
   \begin{split} \label{normest}
\|C_\phi(f)\|_{p}^p &= \int_{\D} |f'(\phi(z))|^p \, |\phi'(z)|^{p-2} (1-|z|^2)^{p-2} |\phi'(z)|^2 \, dA(z)\\
&\approx \int_{\phi(\D)} |f'(w)|^p \delta_{\phi(\D)}^{p-2}(w) \, dA(w)\\
&\leq C \|f\|_{p}^p,
   \end{split}
\end{align}
where $w = \phi(z)$.
 Hence, $C_\phi$ is bounded if and only if $\mu$ given by
\begin{align}  \label{defn mu}
  d\mu(w) = \delta_{\phi(\D)}^{p-2}(w)\chi_{\phi(\D)}(w)dA(w)
\end{align}  
is $\B_p$-Carleson, where $\chi_{\phi(\D)}$ is the characteristic function of $\phi(\D) \subset \C$,
and in this case
\begin{align} \label{Ccarleson}
\|C_\phi\|_p^p \approx A_\mu^p \approx B_\mu.
\end{align}

From the computation in (\ref{normest}), with $f(z)=z$, we see that
\begin{align} \label{Bpnorm}
 \|\phi\|^p_p = \|C_{\phi}(z)\|^p_p \approx \int_{\phi(\D)} \delta_{\phi(\D)}^{p-2}(w) dA(w),
\end{align}
whence
\begin{align} \label{Bpcarleson2}
\text{
$\phi \in \B_p$ implies $\mu(S(I)) = o(1), \quad \ell(I) \to 0$.}
\end{align}
The difference between (\ref{Bpcarleson2}) and (\ref{CarlConst}) motivated our work leading to Theorem \ref{compgp}.

Note that for $p \ge 2$ and a univalent selfmap  $\phi$ of $\D$, use of the Schwarz-Pick lemma gives
\begin{align}
	\begin{split} \label{contraction}
	\| C_{\phi} f \|^p_p &=  \int_{\D}  |f'(\phi(z))|^p (1-|z|^2)^{p-2} |\phi'(z)|^{p-2} |\phi'(z)|^2 \, dA(z)\\
 	&\leq \int_{\D}  |f'(\phi(z))|^p (1-|\phi(z)|^2)^{p-2} |\phi'(z)|^2 \, dA(z)\\
	&= \int_{\phi(\D)} |f'(w)|^p (1-|w|^2)^{p-2} \, dA(w)\\
	&\leq \|f\|_p^p.
	\end{split}
\end{align}

\section{Composition semigroups on $\B_p$}

In \cite{Sis2}, Siskakis shows that every semigroup of functions $\{\phi_t\}$ induces a strongly continuous semigroup of operators on $\B_2$.  The main idea of the proof is to show that
$\lim_{t\to0}  \|\phi_t(z)-z\|_2=0$ and that this implies that $\{C_t\}$ is strongly continuous
on all polynomials.
The next result provides a partial extension  to all $p>1$.

\begin{prop} \label{geq1}
 Let $p>1$ and denote by $\mathcal{P}$ the vector space
of polynomials and let  $\{\phi_t\}$ be any semigroup of self-maps of $\D$.  The following are equivalent:
\begin{enumerate}
\item [(1)] $\mathcal{P} \subset (\phi_t,\B_p)$;
\item [(2)] $\lim_{t\to0}  \|\phi_t(z)-z\|_p=0$.
\end{enumerate}
\end{prop}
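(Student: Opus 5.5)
The implication (1)$\Rightarrow$(2) is immediate, since the identity function $z$ is a polynomial, and $z\in(\phi_t,\B_p)$ is exactly the statement $\lim_{t\to0}\|\phi_t(z)-z\|_p=0$. For (2)$\Rightarrow$(1), note that $(\phi_t,\B_p)$ is a vector space: if $f,g$ satisfy (\ref{SCC}), so does any linear combination, by the triangle inequality. Constants trivially belong to it. So it suffices to show that each monomial $z^n$, $n\ge 2$, lies in $(\phi_t,\B_p)$, assuming only (2). In particular $\phi_t\in\B_p$ for small $t$, and we only need $0<t\le t_0$.

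Fix $n\ge2$ and write
$$
(\phi_t^n-z^n)' = n\phi_t^{n-1}(\phi_t'-1) + n\bigl(\phi_t^{n-1}-z^{n-1}\bigr).
$$
Since $|\phi_t|<1$ on $\D$, the first term has weighted $L^p$ norm at most $n\|\phi_t-z\|_p$. Recall that
$$
\|f\|_p^p=\int_\D|f'|^p(1-|z|^2)^{p-2}\,dA .
$$
Hence this term tends to $0$ by (2).

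For the second term we must show
$$
\int_\D |\phi_t^{n-1}(z)-z^{n-1}|^p(1-|z|^2)^{p-2}\,dA(z)\to0 .
$$
I would use dominated convergence. The integrand is bounded by $2^p(1-|z|^2)^{p-2}$, which is integrable on $\D$ precisely because $p>1$ gives $p-2>-1$. Pointwise convergence $\phi_t(z)\to z$ as $t\to0^+$ follows from (SG1) and (SG3). Taking the $p$-th root and combining the two terms by Minkowski's inequality gives $\|\phi_t^n-z^n\|_p\to0$.

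The same estimate shows $\|\phi_t^n\|_p\le n\|\phi_t\|_p+n\|z^{n-1}\|_{L^p((1-|z|^2)^{p-2}dA)}<\infty$. Thus $z^n\circ\phi_t\in\B_p$, so the membership in $(\phi_t,\B_p)$ is meaningful even without assuming that $C_t$ is bounded.

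The only delicate point is this second term. The naive bound would require control of $\phi_t'$, but the splitting above isolates $\phi_t'-1$ into the first term, which is handled by (2). What remains involves no derivatives of $\phi_t$, so the integrability of the weight for $p>1$ is exactly what dominated convergence needs.
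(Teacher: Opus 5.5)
Your proof is correct. It uses the same splitting as the paper, $(\phi_t^n-z^n)'=n\phi_t^{n-1}(\phi_t'-1)+n(\phi_t^{n-1}-z^{n-1})$, and bounds the first piece by $n\|\phi_t-z\|_p$ using $|\phi_t|<1$, just as the paper does.

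The difference is in how you handle the second piece. The paper bounds it by $\sup_{\D}|\phi_t^{n-1}-z^{n-1}|^p\int_\D(1-|z|^2)^{p-2}\,dA$. It then invokes the cited fact (\ref{unif}) that $\phi_t\to z$ uniformly on all of $\D$, which is a nontrivial property of semigroups taken from the literature. You instead use only the pointwise convergence $\phi_t(z)\to z$, which follows directly from (SG1) and (SG3), together with dominated convergence against the integrable majorant $2^p(1-|z|^2)^{p-2}$.

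Both arguments rest on the integrability of the weight for $p>1$. Yours is self-contained and avoids the external uniform-convergence theorem. The paper's version, in exchange for citing that result, gives an explicit rate: since $|\phi_t^{n-1}-z^{n-1}|\le (n-1)|\phi_t-z|$, the second piece is $\lesssim n(n-1)\|\phi_t-z\|_{H^\infty}$.

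A minor remark: your bound for $\|\phi_t^n\|_p$ is valid but roundabout. The estimate $|(\phi_t^n)'|\le n|\phi_t'|$ gives $\|\phi_t^n\|_p\le n\|\phi_t\|_p$ directly.
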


\begin{proof}
That (1) implies (2) is of course immediate, since $p(z)=z$ is in $\mathcal{P}$.  For the converse,
by linearity it suffices to show that (2) implies that all monomials belong to $(\phi_t,\B_p)$. For $k$ a positive integer,
\begin{align*}
	 \| \phi_t^k(z) - z^k \|^p_p  &=  k^p \int_{\D} | \phi_t^{k-1}(z) \phi'_t(z) -  z^{k-1} |^p
             (1-|z|^2)^{p-2} \, dA(z)\\
	&\leq 2^{p-1} k^p  \int_{\D} | \phi_t^{k-1}(z) \phi'_t(z) -  \phi_t^{k-1}(z)  |^p (1-|z|^2)^{p-2} \, dA(z)\\
	& \quad + 2^{p-1} k^p \int_{\D} | \phi_t^{k-1}(z)  -  z^{k-1} |^p (1-|z|^2)^{p-2} \, dA(z) .
\end{align*}
Since $|\phi_t(z)|\le 1$, the first term is bounded by $2^p k^p\|\phi_t(z)-z\|_p$, which by (2) has
a limit of 0 as $t\to 0$.  The second term also has a limit of 0, since it is known that
\begin{align} \label{unif}
\lim_{t\to0} \|\phi_t(z) - z\|_{H^\infty}=0,
\end{align}
for every semigroup $\{\phi_t\}$; see  \cite[Prop. 3.2]{Gum} or \cite[Thm. 1.2]{AJS2}.
Hence every monomial $z^k$ belongs to $(\phi_t,\B_p)$, and the proof is complete.
\end{proof}

It is known that $\mathcal{P}$ is dense in $\B_p$; see \cite[Prop. 2]{AFP}.  Hence the following
is an immediate corollary of Proposition \ref{subspSCC} and Proposition \ref{geq1}.

\begin{cor} \label{p>1}
Let $p>1$ and suppose that the semigroup $\{\phi_t\}$ satisfies
\newline $\sup_{0\le t\le 1} \|C_t\|_{\B_p}<\infty$.
Then $[\phi_t,\B_p]=\B_p$ if and only if $\lim_{t\to0}  \|\phi_t(z)-z\|_p=0$.
\end{cor}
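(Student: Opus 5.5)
The plan is to combine Proposition \ref{subspSCC} with Proposition \ref{geq1} and the density of $\mathcal{P}$ in $\B_p$ \cite[Prop. 2]{AFP}. The hypothesis $\sup_{0\le t\le 1}\|C_t\|_{\B_p}<\infty$ is exactly what Proposition \ref{subspSCC} needs. One should also check that the semigroup acts on $\B_p$, i.e.\ each $C_t$ is bounded. For $t\le 1$ this is part of the hypothesis. For $t>1$, write $t=n+s$ with $0\le s<1$; then $C_t=C_1^nC_s$ by (SG2), which is bounded. Thus Proposition \ref{subspSCC} applies, and $(\phi_t,\B_p)$ is a closed subspace of $\B_p$, is invariant under the $C_t$, and is the maximal closed subspace of strong continuity. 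So $[\phi_t,\B_p]$ exists and equals $(\phi_t,\B_p)$.

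For the forward direction, suppose $[\phi_t,\B_p]=\B_p$. Then the function $z$, which lies in $\B_p$, belongs to $(\phi_t,\B_p)$. By definition this means $\lim_{t\to0}\|\phi_t(z)-z\|_p=0$; equivalently, this is the implication (1)$\Rightarrow$(2) of Proposition \ref{geq1}.

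For the converse, suppose $\lim_{t\to0}\|\phi_t(z)-z\|_p=0$. Proposition \ref{geq1} then gives $\mathcal{P}\subset(\phi_t,\B_p)$. Since $(\phi_t,\B_p)$ is closed and $\mathcal{P}$ is dense in $\B_p$, we get $(\phi_t,\B_p)=\B_p$, and hence $[\phi_t,\B_p]=\B_p$.

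There is no substantial obstacle; all the analytic work is contained in Propositions \ref{subspSCC} and \ref{geq1}. The one point needing care is whether density should be taken with respect to the seminorm $\|\cdot\|_p$ or the full norm (including $|f(0)|$). Since constants are fixed by composition and $f(\phi_t(0))\to f(0)$ by (SG3), the two choices give the same strong continuity, so the argument is unaffected.
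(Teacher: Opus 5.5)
Your proposal is correct and follows the paper's argument. The paper obtains this corollary the same way, from Proposition \ref{subspSCC} (closedness of $(\phi_t,\B_p)$ under the uniform bound), Proposition \ref{geq1}, and the density of polynomials; your extra checks (boundedness for all $t$ via $C_t=C_1^nC_s$, and seminorm versus full norm) are details the paper leaves implicit.
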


\begin{cor} \label{geq2}
For $p \geq 2$ and every semigroup $\{\phi_t\}$, $[\phi_t,\B_p]=\B_p$.
\end{cor}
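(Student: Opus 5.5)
The plan is to reduce everything to Corollary \ref{p>1}, which requires two facts: a uniform bound $\sup_{0\le t\le 1}\|C_t\|_{\B_p}<\infty$, and $\lim_{t\to0}\|\phi_t(z)-z\|_p=0$.

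\emph{The uniform bound.} Each $\phi_t$ is a univalent self-map of $\D$ (as recalled from \cite{BP}), so the Schwarz--Pick computation (\ref{contraction}) applies. It gives $\|C_t f\|_p\le\|f\|_p$ for the seminorm. To get a bound for the full norm we also have to control the point evaluation $|f(\phi_t(0))|$. By (\ref{unif}), $\phi_t(0)\to 0$, so $|\phi_t(0)|\le r<1$ for $t\in[0,1]$, after shrinking the interval if necessary and then using the semigroup property. Point evaluations on the compact set $\{|z|\le r\}$ are uniformly bounded on $\B_p$. Hence $\sup_{0\le t\le1}\|C_t\|<\infty$, and in particular each $C_t$ is bounded on $\B_p$.

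\emph{The limit $\|\phi_t(z)-z\|_p\to0$.} We treat $p=2$ first, where this is Siskakis' result \cite{Sis2}. To pass from $p=2$ to $p\ge2$ we would use (\ref{pqnorms}): $\|\phi_t-z\|_p\lesssim\|\phi_t-z\|_2$, which gives the claim immediately. So the main input is the Dirichlet space case.

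If one wants to avoid relying on \cite{Sis2}, the argument for $p=2$ goes as follows. We have
\[
\|\phi_t-z\|_2^2=\int_\D|\phi_t'-1|^2\,dA .
\]
Note that $\phi_t'\to1$ locally uniformly, by (\ref{unif}) and the Cauchy estimates. Also, $\int_\D|\phi_t'|^2\,dA=A(\phi_t(\D))\le\pi$. For a compact $K\subset\D$, the integral over $K$ tends to $0$. The remaining integral over $\D\setminus K$ is bounded by
\[
2\bigl(A(\phi_t(\D\setminus K))+A(\D\setminus K)\bigr).
\]
Since
\[
A(\phi_t(\D\setminus K))=A(\phi_t(\D))-A(\phi_t(K))\le\pi-A(\phi_t(K))\to\pi-A(K),
\]
which is small when $K$ is large, this term is small as well.

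The main obstacle is exactly this tail estimate. It works because of univalence: the area of the image equals $\int|\phi_t'|^2$, with no multiplicity. With both facts in hand, Corollary \ref{p>1} yields $[\phi_t,\B_p]=\B_p$.
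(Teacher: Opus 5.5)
Your proposal is correct and follows the paper's proof. Siskakis' result for $p=2$ together with (\ref{pqnorms}) gives $\|\phi_t(z)-z\|_p\to0$, the Schwarz--Pick computation (\ref{contraction}) gives the uniform bound on $\|C_t\|$, and Corollary \ref{p>1} concludes; your handling of the point-evaluation part of the norm and your self-contained area argument for $p=2$ are valid details the paper leaves implicit (for the former, continuity of $t\mapsto\phi_t(0)$ on $[0,1]$ from (SG3) already gives $\sup_{0\le t\le1}|\phi_t(0)|<1$, so no shrinking of the interval is needed).
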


\begin{proof}
As mentioned above, the case $p = 2$ was done in \cite{Sis2} by showing
that  $\lim_{t\to0}  \|\phi_t(z)-z\|_2=0$.  From (\ref{pqnorms}), it follows
that $\lim_{t\to0}  \|\phi_t(z)-z\|_p=0$ for $p \geq 2$. Also, from (\ref{contraction})
we have that $\|C_t\|_{\B_p}\le 1$ for $p\ge2$.  Hence the result is a consequence of
Corollary \ref{p>1}.
\end{proof}

We next give an example that will be used to construct a semigroup $\{\phi_t\}$
such that $\{C_t\}$ does not act on any $\B_p$, $1<p<2$.
%In order to do that we will need the following Lemmas.

\begin{prop} \label{comb}
There exists a domain $\Omega$, starlike with respect to 0, such that the Riemann map $h: \D \to \Omega$ with $h(0) = 0$ and $h'(0)>0$ is not in any $\B_p$, $1 < p < 2.$
\end{prop}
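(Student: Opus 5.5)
We build $\Omega$ as a "comb": the unit disk with infinitely many radial slits (or thin spikes) attached, so that $\Omega$ stays starlike with respect to $0$. By (\ref{Bpnorm}) applied to $h$ itself,
$$\|h\|_p^p \approx \int_{\Omega} \delta_{\Omega}^{p-2}(w)\,dA(w),$$
so it suffices to construct a bounded starlike $\Omega$ with $\int_\Omega \delta_\Omega^{p-2}\,dA=\infty$ for every $p\in(1,2)$. Since $p-2<0$, this integral is large when $\Omega$ has a lot of boundary near which $\Omega$ has substantial area. Lemma \ref{estDeltaInt} quantifies this: near a segment $J$ of length $\ell$, the region within distance $s$ contributes $\approx s^{p-1}(\ell+s)$.

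Concretely, take $\Omega = 2\D\setminus \bigcup_n J_n$, where $J_n$ are closed radial segments $\{re^{i\theta_n}: 1\le r\le 2\}$. The directions $\theta_n$ are chosen in blocks. In block $k$ we take $N_k$ equally spaced angles in a sector, with spacing $\approx s_k$, where $s_k\to0$ rapidly. Removing radial slits that reach the outer boundary keeps the domain starlike with respect to $0$, and $\Omega$ is simply connected. Note $h$ is bounded, so the only issue is the area integral.

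For a slit $J$ in block $k$, the set $J[s_k/4]$ lies in $\Omega$ up to null sets, and there $\delta_\Omega = \delta_{\widetilde J}$. Indeed, neighboring slits are at distance $\gtrsim s_k$ near radius $1$, and farther out the slits diverge. By the second estimate of Lemma \ref{estDeltaInt}, each such slit contributes $\gtrsim s_k^{p-1}$, since $\ell(J)=1$. These neighborhoods are disjoint, so block $k$ contributes $\gtrsim N_k s_k^{p-1}$. With $N_k\approx s_k^{-1}\cdot 2^{-k}$ (angles packed in an arc of length $2^{-k}$, so the blocks occupy disjoint sectors), the contribution of block $k$ is $\gtrsim 2^{-k}s_k^{p-2}$. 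Choosing $s_k = e^{-4^k}$, for instance, gives $2^{-k}e^{(2-p)4^k}\to\infty$ for every $p<2$ at once. Hence $\int_\Omega\delta_\Omega^{p-2}dA=\infty$ and $h\notin\B_p$ for all $1<p<2$.

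The main obstacle is the geometric bookkeeping. First, the slit sets must be closed so that $\Omega$ is open; the accumulation of directions only at the block limits handles this, since we can include the limit directions as slits too. Second, we must verify $\delta_\Omega=\delta_{\widetilde J}$ on the chosen neighborhoods. For this it is cleaner to use only the portion $J'=\{re^{i\theta_n}: 3/2\le r\le 2\}$ of each slit, where the slits are separated by $\gtrsim s_k$ and away from $\partial(2\D)$ and the inner disk. One could even replace $2\D$ by any starlike region. If uniformity of the separation becomes awkward, a cruder lower bound suffices: $\delta_\Omega(w)\le \operatorname{dist}(w,J)$ for all $w$, so on the strip between adjacent slits $\delta_\Omega\le s_k$. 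This gives $\int\delta_\Omega^{p-2}\ge s_k^{p-2}\cdot$ area of the sector, and the divergence follows with no need for Lemma \ref{estDeltaInt}.
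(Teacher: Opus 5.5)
Your argument is correct and is essentially the paper's. Both use a comb of radial slits, the reduction $\|h\|_p^p\approx\int_\Omega\delta_\Omega^{p-2}\,dA$ from (\ref{Bpnorm}), and a contribution $\gtrsim x^{p-1}$ from each angular gap $x$; your blocks of $N_k\approx 2^{-k}/s_k$ gaps of size $s_k$ are simply another gap sequence with $\sum x<\infty$ but $\sum x^{p-1}=\infty$ for all $p<2$, in the role of the paper's $x_n=Cn^{-1}\log^{-2}n$.

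Your closing sector bound ($\delta_\Omega\le s_k$ on a block-$k$ sector of area $\approx 2^{-k}$) is the cleanest finish. It sidesteps your slightly inaccurate claim that $\delta_\Omega=\delta_{\widetilde J}$ on $J[s_k/4]$ and that this set lies in $\Omega$: that fails near the endpoint on $\partial(2\D)$, although only $\delta_\Omega\le\delta_{\widetilde J}$ is ever needed. Your remark about adding the limit slit so that $\Omega$ is open also fixes a point the paper's definition of $\Omega$ overlooks.
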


\begin{proof}
Let $\{x_n\}_{n\ge1}$ be a decreasing sequence of positive numbers such that $\sum_n x_n = 1$ and $\sum_n x_n^{p-1} = \infty$ for all $p < 2$.  For example, we can let $x_1=1/2$ and $x_n=Cn^{-1}\log^{-2}n$, $n\ge 2$, where
$C^{-1}=2\sum_{n\ge 2} n^{-1}\log^{-2}n$.

Define $\theta_n =  \sum_{k=n}^{\infty} x_k $, and let $\Omega$ be the domain
$$
  \Omega = \D \setminus \bigcup_{n=1}^{\infty} J_n,
$$
where $J_n = \left[ e^{i\theta_n}/2,e^{i\theta_n}\right)$. Notice that
$\delta_\Omega(w) \lesssim \delta_{\widetilde{J_n}}(w)$ for $ w \in J_n[x_n/2]\cap\Omega$.

   Let $h : \D \to \Omega$ be the Riemann map with $h(0) = 0$, $h'(0) > 0$.  Beginning with (\ref{Bpnorm}) and employing Lemma \ref{estDeltaInt},
\begin{align*}
	\| h \|^p_p & \approx \int_\Omega \delta_\Omega^{p-2}(w) \, dA(w)
	\geq \sum_n \int_{J_n[x_n/2]} \delta_{\widetilde{J_n}}^{p-2}(w) \, dA(w)
	\approx \sum_n x_n^{p-1},
\end{align*}
implying $h \notin \B_p$, $1<p<2$.
\end{proof}

%\begin{rem} \label{finiteslits}
%Note that if the number of slits is finite in the above example, then $h: \D \to \D$ is in $\B_p$ and $C_h$ %is bounded on $\B_p$.
%\end{rem}

We now analyze a semigroup induced by $h$ from Proposition \ref{comb} to obtain a counterexample to the $1< p < 2$ version of Corollary \ref{geq2}, in the sense that not every semigroup of functions $\{\phi_t\}$ induces a semigroup of bounded composition operators $\{C_t\}$.
\begin{prop} \label{compgpProp}
There exists a semigroup $\{\phi_t\}$ that  does not act on any $\B_p$, $1<p<2$.
\end{prop}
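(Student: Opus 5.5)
The plan is to use the starlike domain $\Omega$ and Riemann map $h$ from Proposition \ref{comb}, and define $\phi_t(z) = h^{-1}(e^{-t}h(z))$. Since $\Omega$ is starlike with respect to $0$, we have $e^{-t}\Omega\subset\Omega$, so each $\phi_t$ is a univalent self-map of $\D$. Properties (SG1)--(SG3) follow at once from $h^{-1}(e^{-s}h(h^{-1}(e^{-t}h(z)))) = h^{-1}(e^{-(s+t)}h(z))$ and from the joint continuity of $(t,z)\mapsto e^{-t}h(z)$ and of $h^{-1}$. It then suffices to show that for each $p\in(1,2)$ there is a $t>0$ (indeed every $t>0$ will work) with $C_t$ unbounded on $\B_p$. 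The cheapest route is to show $\phi_t\notin\B_p$: since $z\in\B_p$, boundedness of $C_t$ would force $\phi_t = C_t(z)\in\B_p$.

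By (\ref{Bpnorm}), $\|\phi_t\|_p^p\approx\int_{\phi_t(\D)}\delta_{\phi_t(\D)}^{p-2}\,dA$. Note that $\phi_t(\D)=h^{-1}(e^{-t}\Omega)$, and $e^{-t}\Omega$ is the disk $e^{-t}\D$ with the shrunken slits $e^{-t}J_n$ removed, each of length $e^{-t}/2$ and lying well inside $\Omega$. Changing variables $w=h(\zeta)$ and using Lemma \ref{distortion2} to compare $\delta_{\phi_t(\D)}(\zeta)$ with $\delta_{e^{-t}\Omega}(w)/|h'(\zeta)|$, the integral becomes
\[
\int_{e^{-t}\Omega}\delta_{e^{-t}\Omega}^{p-2}(w)\,|(h^{-1})'(w)|^{p}\,dA(w),
\]
so the weight $|(h^{-1})'|^p$ has to be controlled near the slits. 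For the slits $e^{-t}J_n$ with $n$ large, these segments sit in the region $|w|\le e^{-t}$, which is at positive distance from $\partial\D$. There, near a fixed point $e^{-t}e^{i\theta_n}r$ with $1/2<r<1$, the nearest boundary of $\Omega$ is the adjacent slits $J_n,J_{n\pm1}$ at distance $\approx x_n$. So $\delta_\Omega\approx x_n$ on $e^{-t}J_n[x_n/2]$, away from the ends.

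The key step, and the main obstacle, is a lower bound $|(h^{-1})'(w)|\gtrsim c_t>0$ uniformly on these neighborhoods. By Lemma \ref{distortion2}, $|(h^{-1})'(w)|\approx\delta_\D(h^{-1}(w))/\delta_\Omega(w)\approx(1-|h^{-1}(w)|)/x_n$, so I need $1-|h^{-1}(w)|\gtrsim x_n$. This amounts to showing that the harmonic measure at $0$ of the slit boundary near $w$ is at least comparable to the Euclidean width $x_n$. I expect to get it by comparing with the comb region: the channel between $J_n$ and $J_{n+1}$ has width $x_n$, and the point $w$ lies within a bounded number of widths of the open part $|w'|<1/2$. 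A Beurling/extremal-length estimate along the channel then gives $\omega(0,\partial\Omega\cap B(w,Cx_n),\Omega)\gtrsim_t x_n$. Lemma \ref{DistLem}-type hyperbolic distance bounds give the same conclusion, since $\rho_\Omega(0,w)\le\log(1/x_n)+C_t$ implies $1-|h^{-1}(w)|\gtrsim_t x_n$. The latter is the route I would try first: build an explicit path from $0$ to $w$ whose hyperbolic length is $\log(1/x_n)+O_t(1)$, moving radially through $|w|<1/2$ and then up the channel only a bounded number of widths.

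Granting this, and using the comparability $\delta_{e^{-t}\Omega}\approx\delta_{\widetilde{e^{-t}J_n}}$ on $e^{-t}J_n[x_n/2]$ (as in Proposition \ref{comb}), Lemma \ref{estDeltaInt} yields
\[
\|\phi_t\|_p^p\gtrsim_t\sum_n\int_{e^{-t}J_n[x_n/2]}\delta_{\widetilde{e^{-t}J_n}}^{p-2}\,dA\approx\sum_n x_n^{p-1}=\infty.
\]
Hence $\phi_t\notin\B_p$ for every $p\in(1,2)$ and every $t>0$, so $C_t$ is not bounded on any $\B_p$, $1<p<2$.
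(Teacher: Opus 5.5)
Your construction and the reduction to showing $\phi_t=C_t(z)\notin\B_p$ are the paper's, and your formula $\|\phi_t\|_p^p\approx\int_{e^{-t}\Omega}\delta_{e^{-t}\Omega}^{p-2}|(h^{-1})'|^p\,dA$ is correct. The gap is the step you call the main obstacle. You leave it as ``granting this'', and the justification you sketch rests on a wrong picture of the geometry.

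The shrunken slit $e^{-t}J_n$ lies on the same ray as $J_n$, not in the channel between $J_n$ and $J_{n\pm1}$. If $e^{-t}<1/2$, it lies in $\{|w|<e^{-t}\}$, where $\delta_\Omega\ge 1/2-e^{-t}$, so $\delta_\Omega\approx x_n$ fails there. If $e^{-t}>1/2$, its outer part $\{re^{i\theta_n}:1/2\le r<e^{-t}\}$ is contained in $J_n\subset\partial\Omega$, so it is not ``well inside $\Omega$''. Thus $|(h^{-1})'(w)|\approx(1-|h^{-1}(w)|)/x_n$ is not available.

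For $t<\log 2$, the uniform bound $|(h^{-1})'|\gtrsim c_t$ on $e^{-t}J_n[x_n/2]\cap e^{-t}\Omega$ is in fact false. Take a point $w$ at depth $d$ in the middle of a channel adjacent to $J_n$. It can be reached from $0$ only by paths of quasihyperbolic length $\gtrsim d/x_n$. Hence $1-|h^{-1}(w)|\lesssim e^{-cd/x_n}$, and $|(h^{-1})'(w)|\lesssim x_n^{-1}e^{-cd/x_n}\to 0$ as $n\to\infty$ with $d$ fixed.

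The estimates you sketch (entering a channel of width $x_n$ within boundedly many widths, a path of hyperbolic length $\log(1/x_n)+O(1)$) apply only to points within $O(x_n)$ of the channel entrances at $|w|=1/2$. On $e^{-t}J_n$ that is a piece of length $O(x_n)$, and it is empty when $e^{-t}<1/2$ and $n$ is large. On such pieces, Lemma \ref{estDeltaInt} yields only the convergent sum $\sum_n x_n^{p}$.

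The missing observation, which is the one the paper uses, removes the obstacle: you need only one $t$ for each $p$. Take $e^{-t}<1/2$. Then $e^{-t}\overline{\D}$ is a compact subset of $\{|w|<1/2\}\subset\Omega$, so $|(h^{-1})'|\ge c_t>0$ on $e^{-t}\Omega$ by compactness alone. Your integral is then $\gtrsim_t\int_{e^{-t}\Omega}\delta_{e^{-t}\Omega}^{p-2}\,dA=e^{-tp}\int_\Omega\delta_\Omega^{p-2}\,dA=\infty$ by scaling and Proposition \ref{comb}. The paper says the same thing without changing variables: since $\phi_t\to 0$ uniformly, $|h'\circ\phi_t|\le 2h'(0)$ for large $t$. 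Hence $|\phi_t'|=e^{-t}|h'|/|h'\circ\phi_t|\gtrsim e^{-t}|h'|$ and $\|\phi_t\|_p\gtrsim e^{-t}\|h\|_p=\infty$.

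Your claim that every $t>0$ works is true, but it should be deduced in one of two ways. One is from $C_{ns}=C_s^n$, as in the paper: boundedness of some $C_s$ would force boundedness of $C_{ns}$ for large $n$. The other is to integrate only near the pieces $\{re^{i\theta_n}: e^{-t}/2\le r\le 1/2-\eta\}$ of $e^{-t}J_n$, which lie in a compact subset of $\Omega$ and have length bounded below. No harmonic measure or extremal length estimate is needed.
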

\begin{proof}
With  $h$ the univalent map in Proposition \ref{comb}, consider the functions
  $$\phi_t(z) = h^{-1}(e^{-t}h(z)), \quad t\ge 0.$$
Since $\Omega=h(\D)$ is starlike, we have that $\{\phi_t\}\subset H(\D)$ and $\phi_t(\D)\subset \D$.
  It is easy to verify that (SG1), (SG2), and (SG3) hold, and so $\{\phi_t\}$ is a semigroup
of analytic functions on the disk.
(In fact, every semigroup of analytic functions on $\D$ can be represented in a similar way;
see \cite{BCDM} or \cite{Sis} for more on this.)
  Note that $|\phi_t(z)|\to 0$ uniformly on $\D$ as $t\to\infty$,
since $h(\D)$ is bounded.  Also,
\begin{align} \label{phi=h}
	\phi'_t(z) = \frac{e^{-t}h'(z)}{h'(\phi_t(z))}.
\end{align}

For $z \in \D$, $h'(z) = c_1 + O(z)$ as $z \to 0$, where $c_1 > 0$, and so
 there exists $t_0$ such that if $t\ge t_0$, then $|h'(\phi_t(z))| \lesssim 2c_1$, $z\in\D$.
 Thus, from (\ref{phi=h}),
\begin{align*}
  	|\phi'_t(z)| \ge \frac{e^{-t} |h'(z)|}{2c_1}, \quad t\in \D,
\end{align*}
for $t\ge t_0$.  Thus,
\begin{align*}
\|C_t \,z\|_p^p	=\|\phi_t\|_p^p = \int_{\D} |\phi'_t(z)|^p(1-|z|^2)^{p-2} \, dA(z)
	\ge \left( \frac{e^{-t}}{2c_1}\right)^p \|h\|^p_p = \infty, \quad t\ge t_0,
\end{align*}
and hence
$C_t$ does not act on $\B_p$ when $t\ge t_0$.  Since $C_t = C_{t/n}^n$, $n\in \mathbb N$,
it follows that $C_t$ does not act on $\B_p$ for any $t>0$.
\end{proof}

%The intuition behind Proposition \ref{compgpProp} is that for large $t$ the image of $\phi_t$ is nearly a small constant times the image of $h$.  The idea applies to the disk algebra $A$ as well as $\B_p$.

In the previous example the semigroup $\{\phi_t\}$ was not in $\B_p$. In the next example we show that even when
$\{\phi_t\}\subset \B_p$ the operators $\{C_t\}$ may not be bounded.

%%%%%%%%%

\begin{thm} \label{compgp}
Let $1<p<2$.
There exists a semigroup $\{\phi_t\}\subset \B_p$ such that $\{\phi_t\}$ does not act on $\B_p$.
%$1<p<2$.
\end{thm}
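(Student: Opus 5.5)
We will again use a starlike model: $\phi_t(z)=h^{-1}(e^{-t}h(z))$, where $h:\D\to\Omega$ is the Riemann map onto a domain $\Omega$ starlike with respect to $0$, with $h(0)=0$ and $h'(0)>0$. By (\ref{normest})--(\ref{Ccarleson}), $C_t$ is bounded on $\B_p$ exactly when the measure $d\mu_t=\delta_{\Omega_t}^{p-2}\chi_{\Omega_t}\,dA$ on $\Omega_t=\phi_t(\D)$ is $\B_p$-Carleson. Also $\phi_t\in\B_p$ exactly when $\int_{\Omega_t}\delta_{\Omega_t}^{p-2}\,dA<\infty$, by (\ref{Bpnorm}). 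The remark after (\ref{Bpcarleson2}) points to the mechanism: membership only gives $\mu_t(S(I))=o(1)$, while boundedness needs $\mu_t(S(I))\lesssim \ell(I)^p$. So we want $\Omega_t$ to have finite total $\mu_t$-mass, but to carry near $\partial\D$ a sequence of thin structures whose mass is large compared with $\ell(I)^p$ for a suitable sequence of small arcs $I$.

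\emph{Construction.} Take $\Omega=\D\setminus\bigcup_n J_n$, with radial slits $J_n=[r_n e^{i\theta_n}, e^{i\theta_n})$ accumulating at the point $1$. This domain is starlike with respect to $0$. Choose the slits so that consecutive ones are at angular distance $x_n$, their lengths $\ell_n=1-r_n$ satisfy $\ell_n\gg x_n$ (long narrow channels between slits), and $\sum_n x_n^{p-1}\ell_n<\infty$. Near a slit, $\delta_\Omega\approx\delta_{\widetilde J_n}$, and the channel between $J_n$ and $J_{n+1}$ has width about $x_n$ and length $\ell_n$. Lemma \ref{estDeltaInt} then gives channel mass $\approx x_n^{p-1}\ell_n$, so summability yields $h\in\B_p$. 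Since $\Omega_t=h^{-1}(e^{-t}\Omega)$ and $e^{-t}\Omega$ is a compactly contained piece of $\Omega$, the pullback $\phi_t=h^{-1}\circ(e^{-t}h)$ is in $\B_p$ as well.

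\emph{Membership of $\phi_t$.} Use (\ref{Bpnorm}) and change variables through $h$. The image $e^{-t}\Omega$ touches the slits of $\Omega$ only where the scaled slits $e^{-t}J_n$ lie inside the channels of $\Omega$. The set $\Omega\setminus e^{-t}\Omega$ has mass comparable to a sum of terms like $x_n^{p-1}\ell_n$, using Lemma \ref{distortion2} to compare $\delta_{\Omega_t}$ with $\delta_{e^{-t}\Omega}$.

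\emph{Unboundedness.} This is the main obstacle. We must show $\sup_I \mu_t(S(I))/\ell(I)^p=\infty$. Fix $t$. For large $n$, the region $e^{-t}\Omega$ contains, inside the channel of $\Omega$ between $J_n$ and $J_{n+1}$, a channel of width about $x_n$ and length about $\ell_n$. The question is how $h^{-1}$ maps this channel. In $\D$, a long narrow channel pulls back under $h^{-1}$ to a region squeezed against a small arc $I_n\subset\partial\D$, whose length decays like $e^{-\pi \ell_n/x_n}$ by the standard extremal-length estimate for strips. Its $\mu_t$-mass is computed in $e^{-t}\Omega$ coordinates, and it is conformally invariant by (\ref{normest}), so it is $\gtrsim x_n^{p-1}\ell_n$ up to a constant. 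The pullback of the channel lies in $S(I_n)$; to place it there we use Lemma \ref{chordarc} and Lemma \ref{distortion} for hyperbolic disks along the channel. This gives
$$
\frac{\mu_t(S(I_n))}{\ell(I_n)^p}\gtrsim x_n^{p-1}\ell_n\, e^{\pi p\ell_n/x_n}\to\infty,
$$
because $\ell_n/x_n\to\infty$ while $x_n^{p-1}\ell_n$ decays only polynomially. Achieving this requires choices such as $x_n=n^{-2}$ and $\ell_n=n^{-1/2}x_n^{\,0}\cdot\min(1,\cdot)$, adjusted so that the slits fit in $\D$ and the sum converges.

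The delicate step is this last one: controlling the size of $I_n$ for the pullback of the channel under $h^{-1}$ while keeping the Carleson window estimate. The first thing to try is to compare the channel with a rectangle and estimate harmonic measure through extremal length, since harmonic measure at $0$ is comparable to $\ell(I_n)$. If that fails, the fallback is to conclude by contradiction from the uniform-in-$t$ estimate $C_t=C_{t/n}^n$, as in Proposition \ref{compgpProp}.
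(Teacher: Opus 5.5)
There is a genuine gap, and it lies in the construction itself, not only in the step you flag as delicate. For your slits to accumulate only at the point $1$ you need $\ell_n\to0$, so for fixed $t>0$ only finitely many $J_n$ meet $e^{-t}\D$. Your claim that, for large $n$, $e^{-t}\Omega$ contains a channel inside the channel of $\Omega$ between $J_n$ and $J_{n+1}$ is then false. That channel lies in $\{|w|>r_n\}$ with $r_n>e^{-t}$, while $e^{-t}\Omega\subset e^{-t}\D$. The infinitely many slits of $e^{-t}\Omega$, namely the $e^{-t}J_n$, accumulate at $w=e^{-t}$, which is an \emph{interior} point of $\Omega$. Their pullbacks therefore accumulate at the interior point $h^{-1}(e^{-t})$ of $\D$, where no Carleson box with $\ell(I)\to0$ can see them. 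Boxes with $\ell(I)$ bounded below are harmless because $\mu_t(\D)<\infty$, which you need anyway for $\phi_t\in\B_p$. Hence $\overline{\phi_t(\D)}$ meets $\partial\D$ only along the pullbacks of finitely many slit segments $[r_m,e^{-t}]e^{i\theta_m}$. There the geometry is tame, so (\ref{CarlConst}) holds and every $C_t$ is in fact bounded on $\B_p$. For $t$ so large that $r_n>e^{-t}$ for all $n$, your own remark that $e^{-t}\Omega$ is compactly contained in $\Omega$ gives $\|\phi_t\|_\infty<1$, and boundedness is immediate. The fallback via $C_t=C_{t/n}^n$ cannot help, since it only propagates unboundedness from one $t$ to others.

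The channel mechanism also rests on a scaling error. The $\mu_t$-mass is not conformally invariant under $h^{-1}$: (\ref{normest}) concerns the map $\phi_t$, and changing variables $w=h(z)$ introduces the weight $|(h^{-1})'(w)|^p$, exactly as in (\ref{lowerbd}). With this bookkeeping, $\mu(S(I))/\ell(I)^p$ is comparable to $\delta_\Omega(w_I)^{-p}\int_{\Delta_\Omega(w_I,1)}\delta_{e^{-t}\Omega}^{p-2}\,dA$, which is a scale-invariant quantity in the $w$-plane. A single channel of width $x$ contributes $w$-plane mass $\approx x\cdot x^{p-1}=x^p$ to a window of size $x$, so the ratio is $O(1)$ however long the channel is. Likewise, the box of size $\approx e^{-\pi\ell_n/x_n}$ at the far end carries mass $O(\ell(I_n)^p)$, not $\gtrsim x_n^{p-1}\ell_n$, and that far end lies outside $e^{-t}\Omega$ anyway. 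The factor $e^{\pi p\ell_n/x_n}$ is spurious.

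What is needed is a \emph{boundary} point of $\Omega$ with the following property: every small window around it, of size $\delta$, is crossed by infinitely many slits of $e^{-t}\Omega$ with spacings $\Delta\theta_k$, $k\ge n$, where $\sum_{k\ge n}\Delta\theta_k\approx\delta$. Then the $w$-plane mass is $\approx\delta\sum_{k\ge n}(\Delta\theta_k)^{p-1}$. Because $p-1<1$, this beats $\delta^p\approx\delta\bigl(\sum_{k\ge n}\Delta\theta_k\bigr)^{p-1}$.

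The paper gets this by using infinite slits $S_n$ of $\C$ accumulating at $S_0=[1,\infty)$. The scaled slits $e^{-t}S_k$ then run through every window at the tip $w=1$, and with $\Delta\theta_k=k^{-4/(p-1)}$ one obtains $\mu(S(I))\gtrsim n_I^{2-p}\ell(I)^p$. The price is that $\phi_t\in\B_p$ is no longer soft. It requires Lemma \ref{chordarc} to bound $\ell(R_k)$, and a harmonic-measure argument showing $\delta_{\phi_t(\D)}\lesssim k^{-2/(p-1)}$ on $H_k$, so that the contributions near the $R_k$ sum like $\sum k^{-2}$.
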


\begin{proof}
%%%%%%%%%%%%%%%%%%%%%%%%%%%%%%%%%%%%%%%%%%%%%%%%%%%
Let $\theta_0 = 0$ and for integers $n\neq 0$, let
$$
\theta_n=\sum_{k=n}^\infty k^{-4/(p-1)},\quad n\ge1,\quad\text{and},\quad
  \theta_{n}=-\theta_{-n}, \quad n \le -1.
$$
Since $1<p<2$, we have that $\theta_1< \pi/2$.  Also, note that

\begin{align} \label{choice}
\theta_n\approx n^{-(5-p)/(p-1)}\quad\text{and}\quad
\Delta \theta_n =\theta_n-\theta_{n+1}= n^{-4/(p-1)},\quad n\ge 1.
\end{align}

Define the infinite slits $\{S_n\}$ by
  $$S_n = \{re^{i\theta_n} : r \geq 1\},$$
and let $\Omega$ be the starlike domain $\C \setminus \cup_{n \in \mathbb Z} S_n $.
%Let $\Omega$ be the starlike domain $\C - \cup_n (S_n \cup \overline{S_n})$.
Define $h: \D \to \Omega$ to be the Riemann map such that $h(0) = 0$ and $h'(0)>0$,
so that $h(r) > 0$ for $0 < r < 1$ by symmetry.

 The selfmap  $w \mapsto e^{-t}w$ of $\Omega$ creates radial extensions
 $Q_n = \{ re^{i\theta_n}: e^{-t} \leq r \leq 1 \}$ of $S_n$ that are mapped by $h^{-1}$ to continua $R_n = h^{-1}(Q_n)$.
 See Figure \ref{figure1}.

In addition, we define
$L = h^{-1}(\{r\,:\, e^{-t}\le r\le 1\})$.  From Lemma \ref{chordarc},
there is a constant $K$ such that
\begin{align}\label{bddLength}
\ell(R_n)\le K,\quad 1\le n<\infty, \quad \text{and}\quad \ell(L)\le K.
\end{align}
In fact, we can take $K=2K_1$, where $K_1$ is the universal constant from  Lemma \ref{chordarc}.

Let $\phi_t (z) = h^{-1}(e^{-t} h(z)), t \geq 0$.
Again, similarly to the related example in Proposition \ref{compgpProp}, it is easy to verify that
$\{\phi_t\}$ is a semigroup of analytic functions on $\D$.

\begin{figure}[hbt!]
\begin{subfigure}{0.4\textwidth}
\begin{tikzpicture}[scale=2]
\draw (0,0) circle (1cm);
\draw (0,0) node{$\D$};
\draw (0,-1.5) node{$h \downarrow$};
\end{tikzpicture}
\end{subfigure}
\begin{subfigure}{0.1\textwidth}
\begin{tikzpicture}[scale=2]
\draw (0,2) node{$\phi_t$};
\draw (0,1.8) node{$\rightarrow$};
\draw(0,0) node{$$};
\end{tikzpicture}
\end{subfigure}
\begin{subfigure}{0.4\textwidth}
\begin{tikzpicture}[scale=2]
\draw (0,0) circle(1cm);
\foreach \n in {-1,...,5}
{
\draw[dotted] (2^\n: 1) to[out=2^\n-180,in=0] (2^\n: 0.5);
\draw[dotted] (-2^\n: 1) to[out=-2^\n+180,in=0] (-2^\n: 0.5);
}
\draw(0.56,0.49) node{$R_n$};
%\filldraw (32:0.5) circle (1pt) node[anchor=north east]{$z_n$};
%\filldraw (32:1) circle (1pt) node[anchor=south west]{$e^{id_n}$};
\draw (0,-1.5) node{$h^{-1} \uparrow$};

\end{tikzpicture}

\end{subfigure}
\vskip\baselineskip
\begin{subfigure}{0.4\textwidth}
\begin{tikzpicture}[scale=2]
\foreach \n in {-2,...,5}
{
\draw[->](2^\n: 1) -- (2^\n: 2);
\draw[->](-2^\n: 1) -- (-2^\n: 2);
}
\draw (1.2,1) node{$S_n$};
\filldraw (32: 1) circle (1pt) node[anchor=north east]{$e^{i\theta_n}$};
\filldraw (0,0) circle(0.3pt) node[anchor=north east]{$0$};
\end{tikzpicture}
\end{subfigure}
\begin{subfigure}{0.1\textwidth}
\begin{tikzpicture}[scale=2]
\draw (0,1) node{$z \mapsto e^{-t}z$};
\draw(0,0) node{$$};
\end{tikzpicture}
\end{subfigure}
\hfill
\begin{subfigure}{0.4\textwidth}

\begin{tikzpicture}[scale=2]
\foreach \n in {-2,...,5}
{
\draw[->](2^\n: 1) -- (2^\n: 2);
\draw[->](-2^\n: 1) -- (-2^\n: 2);
\draw[dotted] (2^\n: 1) -- (2^\n: 0.5);
\draw[dotted] (-2^\n: 1) -- (-2^\n: 0.5);
}
\draw (0.71,0.64) node[anchor = north east]{$Q_n$};
\filldraw (0,0) circle(0.3pt) node[anchor=north east]{$0$};
\end{tikzpicture}
\end{subfigure}
\caption{$\phi_t(z) = h^{-1}(e^{-t}h(z))$}  \label{figure1}
\end{figure}

\medskip

First we will show that  $\phi_t \in \B_p$.
We introduce the  notation
\begin{align*}
U&=  h^{-1}(e^{-t}\D);\\
V&=\{ z\in \phi_t(\D) \,:\, |h(z)|>e^{-t}\,\,\text{and}\,\,
       \delta_{\phi_t(\D)}(z) = \delta_{\widetilde{ L}}(z)\};\\
G&=\{ z\in \phi_t(\D) \,:\,  |h(z)|>e^{-t}\,\,\text{and}\,\,
      \delta_{\phi_t(\D)}(z) = \delta_{\D}(z)\};\\
H_k &=\{ z\in \phi_t(\D) \,:\, |h(z)|>e^{-t}\,\,\text{and}\,\, \delta_{\phi_t(\D)}(z) =
   \delta_{\widetilde{ R_k}}(z)\},  k \in \mathbb Z \setminus \{0\}.
\end{align*}

It is easily checked that
$\overline U= \{ z\in \phi_t(\D) \,:\,  |h(z)|\le e^{-t}\}$,
and so
$$
\phi_t(\D) =\overline U \cup V\cup G\cup \bigcup_{k \neq 0} H_k.
$$
Hence,  to show $\phi_t \in \B_p$, by (\ref{Bpnorm}) it suffices to show that the integral of $\delta_{\phi_t(\D)}^{p-2}$
over each these sets is finite.

From the Schwarz lemma applied to the restriction of $h^{-1}$ to $\D$, we have that
$U\subset e^{-t}\D$.  Hence $h$ is a bi-Lipschitz map from $U$ to $e^{-t}\D$, and
$$
I_1 = \int_{U}\delta_{\phi_t(\D)}^{p-2}\, dA
    \le \int_{U}\delta_{U}^{p-2}\, dA
      \approx \int_{e^{-t}\D}\delta_{e^{-t}\D}^{p-2}\, dA<\infty.
$$

Next, $V\subset L[2]$, and using Lemma \ref{estDeltaInt} and (\ref{bddLength}) we get that
$$
I_2= \int_{V}\delta_{\phi_t(\D)}^{p-2}\, dA
     \le \int_{L[2]}\delta_{\widetilde L}^{p-2}\, dA
\lesssim 2^{p-1}(\ell(L)+2)<\infty.
$$

Regarding the third integral, since $\delta_{\phi_t(\D)}=1-|z|$ on $G$ and $p>1$,
$$
I_3=\int_{G} \delta_{\phi_t(\D)}^{p-2} \, dA=\int_{G} (1-|z|)^{p-2} \, dA(z)
\le
\int_{\D} (1-|z|)^{p-2} \, dA(z)<\infty.
$$

Finally, we turn to the integral over $\bigcup_{k \neq 0} H_k$. We will need to use some facts about harmonic measure.
  %Consider $a \in H_k$, where $2\le k<\infty$,
 %and for simplicity write
%$$
%\Delta = \Delta_{\phi_t(\D)}(a, 1).
%$$

 For a domain $\mathcal D$ with measurable set $E \subset \partial \mathcal D$, we denote by $$\omega(w_0,E,\mathcal D)= \omega_\mathcal D(w_0,E)$$ the harmonic measure of $E$ with respect to $w_0 \in \mathcal D$.  See \cite{GM} for more information.

Let $z \in H_k$, where $2\le k<\infty$,
 and for simplicity write
$$
\Delta = \Delta_{\phi_t(\D)}(z, 1).
$$
With $z \in H_k$, $k\ge2$, we have that $e^{-t} < |h(z)|$ and
we first consider the case that
$|h(z)|\le 1$ as well.
Then $h(\Delta)= \Delta_{e^{-t}\Omega}(h(z),1)$ and using Lemma \ref{distortion} we see that
$$
\diam (h(\Delta)) \lesssim \Delta \theta_{k}.
$$
It follows that
$$
\omega_{\D\setminus \Delta}(0,\Delta) = \omega_{\Omega \setminus h(\Delta)}(0,h(\Delta))
\lesssim \left( \diam (h(\Delta)) \right)^{1/2}
\lesssim \Delta \theta_{k}^{1/2},\quad |h(z)|\le 1,
$$
where \cite[Cor. III.9.3]{GM} was used for the first inequality.
Also, by the radial  Hall's Lemma (see \cite[p.125]{GM}) and Lemma \ref{distortion},
$$
\omega_{\D \setminus \Delta}(0,\Delta) \gtrsim m(\Delta^*)
\approx \delta_{\phi_t(\mathbb D)}(z),
$$
where $\Delta^*$ is the radial projection of $\Delta$ to $\partial \D$, and $m$ is Lebesgue measure on $\partial \D$.  Hence
\begin{align} \label{case h(z) small}
\delta_{\phi_t(\mathbb D)}(z)\lesssim \Delta \theta_{k}^{1/2}, \quad z\in H_k  \quad k\ge 2,
\end{align}
when $e^{-t} < |h(z)|\le 1$.

Now consider $z \in H_k$, $k\ge 2$,  with $|h(z)|> 1$,
so that $\delta_\Omega(h(z))\le |h(z)|\Delta\theta_k/2$.  From Lemma \ref{DistLem} we then get that
$$
\rho_\Omega(0,h(z))\ge \frac12\log\left(1+\frac {|h(z)|}{|h(z)|\Delta\theta_k/2}\right)
    \ge \frac12\log\left(\frac1{\Delta\theta_k}\right).
$$
Thus, since $\rho_\D(0,z)=\log((1+|z|)/(1-|z|))$,
$$
1-|z| \approx \exp(-\rho_\D(0,z))
    = \exp(-\rho_\Omega(0,h(z)))
      \le  \Delta \theta_{k}^{1/2}.
$$
Hence, for $z \in H_k$ and $|h(z)|>1$,
$$
\delta_{\widetilde{ R_k}}(z)=\delta_{\phi_t(\D)} (z)\le 1-|z|\lesssim \Delta \theta_{k}^{1/2},
   \quad k\ge 2.
$$

Combined with (\ref{case h(z) small}), this shows that for $z \in H_k$, $2\le k<\infty$, $\delta_{\phi_t(\D)}(z) \lesssim \Delta \theta_k^{1/2}= k^{-2/(p-1)}$.
It follows that there is a positive constant $C$ such that
$H_k\subset R_k[Ck^{-2/(p-1)}]$, $2\le k<\infty$.  Also, $H_1\subset R_1[2]$. Hence,
again using Lemma \ref{estDeltaInt} and (\ref{bddLength}),
\begin{align*}
I_4&= \int_{\bigcup_{k \neq 0} H_k}\delta_{\phi_t(\D)}^{p-2}\, dA
  \le 2 \int_{H_1}\delta_{\widetilde{ R_1}}^{p-2}dA\,
     + 2 \sum_{k=2}^\infty\int_{H_k}\delta_{\widetilde{ R_k}}^{p-2}\, dA\\
  &\lesssim \int_{ R_1[2]}\delta_{\widetilde{ R_1}}^{p-2}dA
    + \sum_{k=2}^\infty \int_{R_k[Ck^{-2/(p-1)}]}\delta_{\widetilde{ R_k}}^{p-2}\, dA\\
  &\lesssim \ell(R_1)+\sup_{k\ge2}\ell(R_k)\sum_{k=2}^\infty k^{-2}<\infty.
\end{align*}

 Hence
$$
 \int_{\phi_t(\D)} \delta_{\phi_t(\D)}^{p-2} \, dA =I_1+I_2+I_3+I_4 < \infty,
$$
 and this completes the proof that $\phi_t \in \B_p.$

%%%%%%%%%%%%%%%%%%%%%%%%%%%%%%%%%%%%%%%%
\smallskip

Next, we show that the operators $\{C_t\}$ are not bounded on $\B_p$ by showing that
(\ref{CarlConst}) fails.
Let $I\subset \partial \D$ be an arc centered at $1$, and let $\mathcal{S}(I)$  be the corresponding Carleson square.
By Lemma \ref{distortion} we can choose $z_I\in (0,1)$
%\begin{align}
  such that $1-z_I\approx \ell(I)$ and $\Delta_{\D}(z_I,1) \subset \mathcal{S}(I)$.  Let  $\Delta_I=\Delta_{\D} (z_I,1)$.
%Here $\Delta(z_0,1)$ is the hyperbolic disc in $\D$, centered at $z_0$ with radius $1$.
%z_0 = 1 - \frac{\ell(I)}{2}, \text{ and } w_0 = h(z_0).
%\end{align}

%The function $h$ behaves roughly like a dilation by factor $|h'(z_0)|$ in $\Delta$.  We use a change of variables $w = h(z)$, with $w_0 = h(z_0)$, and apply Lemmas \ref{distortion} and \ref{distortion2}.

Then, with $\mu$ the measure given in \eqref{defn mu},
\begin{align*}
\mu (\mathcal{S}(I) ) & \ge \mu(\Delta_I)
= \int_{\Delta_I} \delta_{\phi_t(\D)}^{p-2}(z) \chi_{\phi_t(\D)}(z)\, dA(z)\\
&\approx |h'(z_I)|^{-2} \int_{\Delta_I} \delta_{\phi_t(\D)}^{p-2}(z) \chi_{\phi_t(\D)}(z) |h'(z)|^2 \, dA(z),
\end{align*}
where Lemma \ref{distortion} was used for the last estimate.  Hence, using the
change of variable $w = h(z)$ with $w_I = h(z_I)$, and Lemma \ref{distortion2}, we get
that
\begin{align}\label{lowerbd}
\mu (\mathcal{S}(I) )\gtrsim |h'(z_I)|^{-p} \int_{\Delta_{\Omega}(w_I,1) } \delta_{e^{-t}\Omega}^{p-2}(w)  \chi_{e^{-t}\Omega}(w) \, dA(w).
\end{align}
Also note that $w_I\in(0,1)$.
  For each $n\ge 1$, introduce the regions
$$\mathcal{W}_n = \{w : 0 < \arg w < \theta_n, w_I < |w| < w_I + \theta_n\},$$
where the integral above is easy to estimate, and introduce the notation
$$n_I = \inf \{n : \mathcal{W}_n \subset \Delta_{\Omega}(w_I,1) \}
\quad \text{and}\quad \theta_I=\theta_{n_I}.$$
Using (\ref{delta}) and Lemma \ref{distortion},  we see that
\begin{align}\label{est theta}
|h'(z_I)| \, \ell(I) \approx |h'(z_I)|(1-z_I)\approx \delta_\Omega(w_I)\approx \theta_I.
\end{align}
Denote by $J_k$ the line segment $J_k=[w_I e^{i\theta_k}, ( w_I + \theta_I)e^{i\theta_k}].$
 From (\ref{lowerbd}), using that $\mathcal{W}_{n_I} \subset \Delta_{\Omega}(w_I,1)$ and
then Lemma \ref{estDeltaInt}, we get that
\begin{align*}
\mu (\mathcal{S}(I) )&\gtrsim
     \theta_I^{-p}\ell(I)^p\int_{\mathcal{W}_{n_I} } \delta_{e^{-t}\Omega}^{p-2}(w)  \chi_{e^{-t}\Omega}(w) \, dA(w)\\
   &\ge \theta_I^{-p}\ell(I)^p\sum_{k=n_I+1}^{\infty} \int_{J_k[\Delta \theta_k/2]}
   \delta_{\widetilde{J_k}}^{p-2}(w) \, dA(w)
  \approx \theta_I^{-p+1}\ell(I)^p\sum_{k=n_I}^{\infty} (\Delta \theta_k)^{p-1}.
\end{align*}

Using (\ref{choice}), this can be re-written
\begin{align}\label{CarlFails}
\mu (\mathcal{S}(I) )\gtrsim n_I^{2-p}\ell(I)^p.
\end{align}
From the Schwarz lemma applied to the restriction of $h^{-1}$ to $\D$, $z_I<w_I<1.$
Also, as $\ell(I)\to 0$, $z_I\to 1$ and so $w_I\to 1$ as well.  Hence, using (\ref{est theta}),
$\theta_I\approx\delta_\Omega(w_I) = 1-w_I\to0$ as $\ell(I)\to 0$.  Now (\ref{choice}) tells
us that $n_I\to\infty$ as $\ell(I)\to0$, and so (\ref{CarlFails}) means that (\ref{CarlConst})
fails, and hence $\{C_t\}$ does not act on $\B_p$.

\end{proof}

\begin{rem} \label{rmk}
For the semigroup $\{\phi_t\}$ in Theorem \ref{compgp}, we do not know if $[\phi_t,\B_p]$
exists.  There exist closed subspaces of strong continuity that are invariant under each $C_t$,
such as the one-dimensional subspace
of constant functions, but we do not know if there is a maximal one.  But
since $\{\phi_t\}$ does not act on $\B_p$,  if $[\phi_t,\B_p]$ exists it can not be $\B_p$.
\end{rem}

Before giving the other examples mentioned in the Introduction, we need a preliminary result.

\begin{prop} \label{fullMeasure}
Let $1<p<2$ and let
$\{\phi_t\} \subset \B_p$ be a semigroup such that $A(\phi_t(\D)) = A(\D)$ for sufficiently small $t > 0$.  Then $\| \phi_t(z) - z\|_p \to 0$ as $t \to 0$.
\end{prop}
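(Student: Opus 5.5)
Write $\|\phi_t(z)-z\|_p^p=\int_\D|\phi_t'(z)-1|^p(1-|z|^2)^{p-2}\,dA(z)$. The plan is to prove that this tends to $0$ by a Vitali-type argument. We need pointwise convergence of the integrand, and we need uniform integrability of $|\phi_t'|^p(1-|z|^2)^{p-2}$ near the boundary.

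\emph{Pointwise convergence.} By (\ref{unif}), $\phi_t\to z$ uniformly on $\D$. By the Cauchy estimates, $\phi_t'\to 1$ locally uniformly on $\D$. Hence for each fixed $r<1$,
\[
\int_{r\D}|\phi_t'-1|^p(1-|z|^2)^{p-2}\,dA\to 0 .
\]
It therefore suffices to show that
\[
\limsup_{t\to0}\int_{\D\setminus r\D}|\phi_t'(z)-1|^p(1-|z|^2)^{p-2}\,dA(z)
\]
can be made small by taking $r$ close to $1$. Since $|\phi_t'-1|^p\le 2^{p-1}(|\phi_t'|^p+1)$, and $(1-|z|^2)^{p-2}$ is integrable because $p>1$, the task reduces to controlling the tail $\int_{\D\setminus r\D}|\phi_t'|^p(1-|z|^2)^{p-2}\,dA$, uniformly in small $t$.

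\emph{Tail estimate via the area hypothesis.} Change variables as in (\ref{normest}). Let $w=\phi_t(z)$ and use (\ref{delta}), which gives $|\phi_t'(z)|(1-|z|^2)\approx\delta_{\phi_t(\D)}(w)$ since $\phi_t$ is univalent. Then
\[
\int_{E}|\phi_t'|^p(1-|z|^2)^{p-2}\,dA \approx \int_{\phi_t(E)}\delta_{\phi_t(\D)}^{p-2}(w)\,dA(w).
\]
Here the exponent $p-2$ is negative, and $\delta_{\phi_t(\D)}\le \delta_\D$, so this integrand is \emph{larger} than $(1-|w|)^{p-2}$. The difficulty is therefore controlling how much larger it is, near the slits or other omitted pieces of $\phi_t(\D)$. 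The hypothesis $A(\phi_t(\D))=A(\D)$ means that $\D\setminus\phi_t(\D)$ is a null set, typically a union of arcs or slits. I would not try to control $\delta_{\phi_t(\D)}$ directly. Instead I would use the semigroup property: for $0<t\le t_1$ with $t_1$ fixed and small, $\phi_{t_1}=\phi_{t_1-t}\circ\phi_t$, so $\phi_{t_1}(\D)\subset\phi_t(\D)$ and $\delta_{\phi_t(\D)}\ge\delta_{\phi_{t_1}(\D)}$ on $\phi_{t_1}(\D)$. This gives the domination
\[
\delta_{\phi_t(\D)}^{p-2}\le \delta_{\phi_{t_1}(\D)}^{p-2}\quad\text{on }\phi_{t_1}(\D),
\]
and $\delta_{\phi_{t_1}(\D)}^{p-2}$ is integrable over $\phi_{t_1}(\D)$ by (\ref{Bpnorm}), because $\phi_{t_1}\in\B_p$. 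Since $\D\setminus\phi_{t_1}(\D)$ has measure zero, $\phi_t(\D)\setminus\phi_{t_1}(\D)$ is a null set. So the function $F=\delta_{\phi_{t_1}(\D)}^{p-2}\chi_{\phi_{t_1}(\D)}$ is a single $L^1(\D)$ majorant, almost everywhere, of $\delta_{\phi_t(\D)}^{p-2}\chi_{\phi_t(\D)}$ for all $t\le t_1$. This is exactly where the full-area hypothesis is used.

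\emph{Conclusion.} Rather than pulling back, one can now work entirely on the image side:
\[
\|\phi_t\|_p^p\approx\int_\D \delta_{\phi_t(\D)}^{p-2}\chi_{\phi_t(\D)}\,dA .
\]
The convergence $\phi_t\to z$ together with the Carathéodory kernel theorem gives $\phi_t(\D)\to\D$ in the kernel sense. Hence $\delta_{\phi_t(\D)}(w)\to 1-|w|$ pointwise, and dominated convergence with majorant $F$ yields convergence of these integrals. To get the norm of the difference, I would pass back to $\D$. Let $E_r=\phi_t^{-1}(\D\setminus r'\D)$, where $r'$ is chosen so that $\int_{\D\setminus r'\D}F$ is small. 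For $t$ small, uniform convergence gives $\D\setminus E_r\supset$ a fixed compact disk. Combining this with the local uniform convergence from the first step gives the result. The main obstacle is the domination step: one must verify carefully that $\phi_t(\D)\supset\phi_{t_1}(\D)$ and that the comparison (\ref{delta}) is valid with constants uniform in $t$. The latter holds because the constants in (\ref{delta}) are universal.
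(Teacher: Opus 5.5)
Your argument is essentially the paper's proof. The common steps are: the split at a radius $r$; local uniform convergence on $r\D$; the bound $|\phi_t'-1|^p\lesssim|\phi_t'|^p+1$; the change of variables via (\ref{delta}); and the key domination $\delta_{\phi_t(\D)}\ge\delta_{\phi_{t_1}(\D)}$ for $t\le t_1$, which comes from $\phi_{t_1}(\D)\subset\phi_t(\D)$ together with the full-area hypothesis. The tail of the integrable majorant $F=\delta_{\phi_{t_1}(\D)}^{p-2}\chi_{\phi_{t_1}(\D)}$ is then made small.

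Your last paragraph needs two corrections.

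First, the Carath\'eodory-kernel detour is not needed and contributes nothing. Convergence of $\int\delta_{\phi_t(\D)}^{p-2}\chi_{\phi_t(\D)}\,dA$ controls $\|\phi_t\|_p$ only up to constants, and says nothing about $\|\phi_t(z)-z\|_p$.

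Second, the inclusion you state points the wrong way. Knowing that $\phi_t^{-1}(r'\D)$ \emph{contains} a fixed disk does not let you bound the tail. What you need is that $\phi_t^{-1}(r'\D)$ is \emph{contained in} a fixed disk $r\D$ with $r<1$, i.e.\ $\phi_t(\D\setminus r\D)\subset\D\setminus r'\D$. Then the tail over $\D\setminus r\D$ is bounded by a constant times $\int_{\D\setminus r'\D}F\,dA<\eps$. This inclusion does hold: $|z|\le|\phi_t(z)|+\|\phi_t(z)-z\|_{H^\infty}$, so it suffices that $\|\phi_t(z)-z\|_{H^\infty}<r-r'$, which is true for small $t$ by (\ref{unif}).

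With that fix, your finish is a slightly more direct substitute for the paper's. The paper instead applies dominated convergence with majorant $\delta_{\phi_{t_0}(\D)}^{p-2}$, using $\chi_{\phi_t(\D\setminus r\D)}\to\chi_{\D\setminus r\D}$ pointwise.
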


\begin{proof}
Let $\eps > 0$ and for convenience introduce the notation
\begin{align*}
\delta_t = \delta_{\phi_t(\D)}.
\end{align*}
We assume there exists $t_0$ such that $\phi_t(\D)$ has full measure for $t \leq t_0$.  From (\ref{Bpnorm}),
\begin{align*}
\int_{\D} \delta_{t_0}^{p-2} \, dA = \int_{\phi_{t_0}(\D)} \delta_{t_0}^{p-2} \, dA
\approx \|\phi_{t_0}\|_p^p < \infty,
\end{align*}
which allows us to
pick $r \in (0,1)$ such that
\begin{align} \label{choice_of_r}
\int_{\D \setminus r\D} \delta_{t_0}^{p-2} \, dA < \eps.
\end{align}
Then
\begin{align*}
\| \phi_t(z) - z\|_p^p &= \int_{r\D} |\phi_t'(z) - 1|^p(1 - |z|^2)^{p-2} \, dA(z) \\
&+ \int_{\D \setminus r\D} |\phi_t'(z) - 1|^p(1 - |z|^2)^{p-2} \, dA(z)\\
&= I(t) + II(t).
\end{align*}
The term $I(t) \to 0$ as $t \to 0$, since $\phi_t' \to 1$ uniformly on compact sets by (\ref{unif})
and the Weierstrass convergence theorem. As for the other term,
\begin{align*}
II(t) &\lesssim \int_{\D \setminus r\D} |\phi_t'(z)|^p(1 - |z|^2)^{p-2} \, dA(z) + \int_{\D \setminus r\D}(1 - |z|^2)^{p-2} \, dA(z)\\
&= III(t) + IV.
\end{align*}
Using (\ref{delta}) and the change of variables $w = \phi_t(z)$, we have, for $t \leq t_0$,
\begin{align*}
III(t)
\approx \int_{\phi_t(\D \setminus r\D)}\delta_t^{p-2}(w) \, dA(w)
 \leq \int_{\D} \delta_{t_0}^{p-2}(w)\chi_{\phi_{t}(\D \setminus r\D)}(w) \, dA(w),
\end{align*}
because $t \leq t_0$ implies $\delta_t \geq \delta_{t_0}$.
The characteristic function $ \chi_{\phi_{t}(\D \setminus r\D)}(z) \to \chi_{\D \setminus r\D}(z)$ pointwise as $t \to 0$, and by (\ref{choice_of_r}) and dominated convergence we have
\begin{align*}
\limsup_{t \to 0} III(t) < \eps.
\end{align*}  Finally, using (\ref{choice_of_r}),
$IV < \eps$
 because $\delta_{t_0}(z) \leq 1-|z|$.  Since $\eps>0$ was arbitrary, this completes the proof.
\end{proof}

We now can give the example promised in the introduction of, for $1<p<2$, a semigroup
$\{\phi_t\}$ that does not act on $\B_p$ but for which
$(\phi_t,\B_p)$ is dense in $\B_p$.

%%%%%%%%%%
\begin{cor} \label{PnotB_p}
Let $1 < p < 2$. There exists a semigroup $\{\phi_t\} \subset \B_p$ such that
$(\phi_t,\B_p)$ is dense in $\B_p$, but $\{\phi_t\}$ does not act on $\B_p$.
\end{cor}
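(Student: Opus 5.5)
We take $\{\phi_t\}$ to be exactly the semigroup constructed in the proof of Theorem \ref{compgp}, namely $\phi_t(z)=h^{-1}(e^{-t}h(z))$, where $h$ is the Riemann map onto the starlike slit domain $\Omega=\C\setminus\bigcup_{n}S_n$. That theorem already gives two of the three required properties: $\{\phi_t\}\subset\B_p$, and $\{\phi_t\}$ does not act on $\B_p$. What remains is to show that $(\phi_t,\B_p)$ is dense in $\B_p$.

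For density, \cite[Prop. 2]{AFP} tells us that $\mathcal P$ is dense in $\B_p$, so it suffices to show $\mathcal P\subset(\phi_t,\B_p)$. By Proposition \ref{geq1}, this is equivalent to
$$\lim_{t\to0}\|\phi_t(z)-z\|_p=0.$$
We obtain this limit from Proposition \ref{fullMeasure}, whose hypotheses are $\{\phi_t\}\subset\B_p$ (already known) and $A(\phi_t(\D))=A(\D)$ for small $t>0$.

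The key step, and the only genuinely new ingredient, is the full-measure condition. We show that $\D\setminus\phi_t(\D)$ is a Lebesgue null set for every $t>0$. Since $\phi_t(\D)=h^{-1}(e^{-t}\Omega)$, and $e^{-t}\Omega\cap\Omega=\Omega\setminus\bigcup_n Q_n$ (up to the slits themselves), we get
$$\D\setminus\phi_t(\D)=h^{-1}\Bigl(\Omega\cap \bigcup_{n\in\mathbb Z}Q_n\Bigr)=\bigcup_n R_n,$$
together with the endpoint continua coming from the closure of $\bigcup_n Q_n$. The set $\Omega\cap\overline{\bigcup_n Q_n}$ is contained in the countable union of radial segments $Q_n$ together with the limiting segment $\{r: e^{-t}\le r\le1\}$, whose preimage is $L$. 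Each $R_n$ and $L$ is a rectifiable curve of length at most $K$ by (\ref{bddLength}), so each has zero area. Alternatively, one can note that $h^{-1}$ is conformal, hence locally Lipschitz, so it maps area-zero sets to area-zero sets. A countable union of null sets is null, hence $A(\phi_t(\D))=A(\D)$.

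The main obstacle is making this set identity precise. In particular, one must check that no part of $\D\setminus\phi_t(\D)$ escapes the countable family: the accumulation of the slits $S_n$ at angle $0$ produces the extra segment $[e^{-t},1]$, which is handled by $L$. Beyond that, one only needs the standard fact that curves of finite length (or conformal images of segments) have zero area. With this in hand, Proposition \ref{fullMeasure}, Proposition \ref{geq1} and the density of $\mathcal P$ combine to give density of $(\phi_t,\B_p)$, while Theorem \ref{compgp} supplies that $\{\phi_t\}$ does not act on $\B_p$.
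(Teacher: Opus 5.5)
Your proposal is correct and follows the paper's proof exactly: you take the semigroup of Theorem \ref{compgp}, use $A(\phi_t(\D))=A(\D)$ to invoke Proposition \ref{fullMeasure}, and then apply Proposition \ref{geq1} together with the density of $\mathcal P$. The paper simply asserts the full-measure property. Your justification fills in that step correctly: $\D\setminus\phi_t(\D)=h^{-1}(\Omega\setminus e^{-t}\Omega)$ is a countable union of conformal images of radial segments (including the $n=0$ segment $[e^{-t},1)$), and is therefore a null set.
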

%%%%%%%%%%%%

\begin{proof}
Let $1 < p < 2$ and let $\{\phi_t\}$ be the semigroup constructed in
Theorem \ref{compgp}, so that $\{\phi_t\} \subset \B_p$,
$\{\phi_t\}$ does not act on $\B_p$, and $A(\phi_t(\D)) = A(\D)$ for all $t$.
Thus Proposition \ref{fullMeasure} tells us that
$\| \phi_t(z) - z\|_p \to 0$ as $t \to 0$, and so $\mathcal{P}\subset (\phi_t,\B_p)$
from Proposition \ref{geq1}. Since the polynomials are dense in $\B_p$, this completes the proof.
\end{proof}

We conclude the paper with another example promised in the introduction.

\begin{thm} \label{[]=B_p}
There exists a nontrivial semigroup $\{\psi_t\}$ such that $[\psi_t,\B_p]=\B_p$, $1<p<2$.
\end{thm}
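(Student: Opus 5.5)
Our plan is to use Corollary \ref{p>1}: we will construct $\{\psi_t\}$ so that $\sup_{0\le t\le1}\|C_t\|_{\B_p}<\infty$ and $\|\psi_t(z)-z\|_p\to0$, simultaneously for all $1<p<2$. The natural choice is a semigroup whose maps are images of the disk under simple linear fractional maps, so that all the geometry is explicit. We take the semigroup of affine maps fixing the boundary point $1$,
$$
\psi_t(z)=e^{-t}z+1-e^{-t},\qquad t\ge0 .
$$
Properties (SG1)--(SG3) are immediate: $\psi_s\circ\psi_t(z)=e^{-s}(e^{-t}z+1-e^{-t})+1-e^{-s}=\psi_{s+t}(z)$. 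The semigroup is nontrivial, and $\psi_t(\D)=B(1-e^{-t},e^{-t})$ is a disk internally tangent to $\partial\D$ at $1$. (If one wants an interior Denjoy--Wolff point, any other linear fractional semigroup would serve equally well.)

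First we will show $\|\psi_t(z)-z\|_p\to0$. Since $\psi_t'(z)-1=e^{-t}-1$ is constant,
$$
\|\psi_t(z)-z\|_p^p=(1-e^{-t})^p\int_\D(1-|z|^2)^{p-2}\,dA(z),
$$
and this integral is finite because $p>1$. Hence it tends to $0$ as $t\to0$. This is the hypothesis of Proposition \ref{geq1} and Corollary \ref{p>1}.

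The main step, and the expected obstacle, is the uniform bound $\sup_{0\le t\le1}\|C_t\|_p<\infty$. For this we use (\ref{Ccarleson}). With $\mu_t=\delta_{\psi_t(\D)}^{p-2}\chi_{\psi_t(\D)}\,dA$, we need $\mu_t(S(I))\lesssim\ell(I)^p$ uniformly in $I$ and in $t\in[0,1]$. Write $D_t=\psi_t(\D)$, a disk of radius $r_t=e^{-t}\in[e^{-1},1]$. Then $\delta_{D_t}(w)=r_t-|w-c_t|$, so after translating and rescaling by the factor $r_t$, which is bounded above and below, the question reduces to a single disk. Let $D$ be a disk of radius comparable to $1$ contained in $\D$. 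We must show
$$
\int_{S(I)\cap D}\delta_D^{p-2}\,dA\lesssim\ell(I)^p .
$$
The key observation is that $S(I)$ has euclidean diameter $\lesssim\ell(I)$, so $S(I)\subset B(w_0,C\ell(I))$ for some $w_0$. It then suffices to check
$$
\int_{B(w_0,s)\cap D}\delta_D^{p-2}\,dA\lesssim s^p
$$
for all $w_0$ and all $0<s\lesssim1$, with constants depending only on a lower bound for the radius of $D$. We split into two cases. If $\delta_D\ge s$ on the relevant set, the integrand is at most $s^{p-2}$ and the area is at most $s^2$, giving $s^p$. Otherwise, $B(w_0,s)\cap D$ lies in the strip $\{\delta_D<3s\}$ along an arc $\gamma$ of $\partial D$ of length $\lesssim s$. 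Since $\partial D$ is a circle of radius $\gtrsim1$, the first estimate of Lemma \ref{estDeltaInt} (or direct polar coordinates) gives
$$
\lesssim s^{p-1}(\ell(\gamma)+s)\lesssim s^p .
$$
The constants depend only on $p$ and on the lower bound $e^{-1}$ for the radii, so $\|C_t\|_p$ is uniformly bounded for $0\le t\le1$. Also $C_t$ is bounded for every $t\ge0$, since $C_t=C_{t/n}^n$; hence $\{\psi_t\}$ acts on $\B_p$.

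Finally we combine the two steps. Proposition \ref{subspSCC} applies, so $[\psi_t,\B_p]$ exists, and Corollary \ref{p>1} together with $\|\psi_t(z)-z\|_p\to0$ yields $[\psi_t,\B_p]=\B_p$ for every $1<p<2$. The only delicate point is the uniformity of the Carleson estimate near the tangency point $1$, where $\partial D_t$ and $\partial\D$ meet. Because the estimate above involves only $\delta_{D_t}$ and the curvature of $\partial D_t$, and not the distance to $\partial\D$, the tangency causes no loss.
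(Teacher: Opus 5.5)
Your argument is correct, but it uses a different semigroup from the paper.

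\textbf{The paper's route.} The paper takes $h(z)=4z/(1+z)^2$ and $\psi_t=h^{-1}(e^{-t}h)$, so that $\psi_t(\D)=\D\setminus[r_t,1)$ is a once-slit disk. It checks with Lemma \ref{estDeltaInt} that the measures $\delta_{\psi_t(\D)}^{p-2}\chi_{\psi_t(\D)}\,dA$ have uniformly bounded Carleson constants. It then gets $\|\psi_t(z)-z\|_p\to0$ from Proposition \ref{fullMeasure}, since the slit disk has full area, and concludes with Corollary \ref{p>1}.

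\textbf{Your route.} Your affine maps have images $B(1-e^{-t},e^{-t})$, which do not have full area, so Proposition \ref{fullMeasure} would not apply. You do not need it. Because $\psi_t'\equiv e^{-t}$, condition (2) of Proposition \ref{geq1} becomes an explicit computation. The Carleson bound for a disk reduces, as you show, to an elementary similarity-invariant estimate over euclidean balls.

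\textbf{Comparison.} Your route is shorter and entirely self-contained. The paper's example is chosen to contrast with Theorem \ref{compgp}: one slit instead of infinitely many accumulating ones, with image of full measure. It also exercises Proposition \ref{fullMeasure}. If only existence is wanted, even the rotation group $\psi_t(z)=e^{it}z$ works: each $C_t$ is an isometry of $\B_p$, and $\|e^{it}z-z\|_p=|e^{it}-1|\,\|z\|_p\to0$.

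\textbf{A small point of rigor.} The first estimate of Lemma \ref{estDeltaInt} controls $\delta_{\widetilde\gamma}^{p-2}$. Since $\delta_D\le\delta_{\widetilde\gamma}$ and $p<2$, this is pointwise smaller than $\delta_D^{p-2}$. So the lemma applies only after you note that $\delta_D=\delta_{\widetilde\gamma}$ at points whose radial projection to $\partial D$ lies in $\gamma$, and choose $\gamma$ to contain all these projections. The direct polar-coordinate computation you mention avoids this issue altogether.
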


\begin{proof}
Let
\begin{align*}
h(z) = \frac{4z}{(1+z)^2},
\end{align*}
so $h$ is a Riemann map of $\D$ onto $h(\D) = \C \setminus \{ r : r \ge 1\}.$ Let
\begin{align*}
\psi_t(z) = h^{-1}(e^{-t}h(z)),
\end{align*}
so that $\psi_t(\D) =\D\setminus[r_t,1)$, where $r_t = h^{-1}(e^{-t})$.
Just as in the proof of Proposition \ref{compgpProp}, we have that $\{\psi_t\}$ is
a semigroup of analytic functions on $\D$.
  It is easy
to verify, using Lemma \ref{estDeltaInt}, that the associated measures
$$d\mu_t(w) = \delta_{\psi_t(\D)}^{p-2}(w)\chi_{\psi_t(\D)}(w)dA(w)$$
are $\B_p$-Carleson measures with Carleson constants $B_{\mu_t}$ uniformly bounded for $t>0$.
Hence, from (\ref{Ccarleson}), the composition
semigroup $\{C_t\}$ induced by $\{\psi_t\}$ satisfies $\sup_{t>0}\|C_t\| <\infty$.
The result is now an immediate consequence of Proposition \ref{fullMeasure} and Corollary \ref{p>1}.
\end{proof}

%%%%%%%%%%%%%%%%%%%%%%%%%%%%%%%%%%%%%%%%%%%%%%%%%

 \end{document}